\newcommand*{\mailto}[1]{\href{mailto:#1}{\nolinkurl{#1}}}
\newtheorem{theorem}{Theorem}[section]
\newtheorem{lemma}{Lemma}[section]
\newtheorem{remark}{Remark}[section]
\numberwithin{equation}{section}
\begin{document}
\thispagestyle{empty}
{\Large\bf   Several generalized Bohr-type inequalities with two }\\\\
{\centerline {\Large\bf parameters} }\\\\
\begin{center}
{\bf Wanqing Hou  \quad  Qihan Wang  \quad Boyong Long$^*$}
\let\thefootnote\relax\footnotetext{\\This work is supported by NSFC (No.12271001) and Natural Science Foundation of Anhui Province (2308085MA03), China. \\$^*$Corresponding author.\\ \quad Email: wanqinghou99@163.com;\quad qihan@ahu.edu.cn; \quad boyonglong@163.com
}
\end{center}

\centerline {\small  (School of Mathematical Sciences, Anhui University, Hefei 230601, China)}
 \hspace*{\fill} \\\\
\noindent{\bf Abstract:} {In this paper, several Bohr-type inequalities are generalized to the form with two parameters for the bounded analytic function. Most of the results are sharp.
 }

\medskip
\noindent {\bf Keywords:} {Bohr inequality; Bohr radius; Bounded analytic functions;
}

\medskip
\noindent {\bf 2020 Mathematics Subject Classification:} {30A10, 30B10}

\section{Introduction}

Let $\mathbb{D}:= \{z\in\mathbb{C}: |z|<1\}$ be the open unit disk and  $\mathbb{\mathcal{A}}:=\{f(z): f(z)=\sum_{k=0}^\infty a_kz^k, z\in\mathbb{D}\}$ be the class of analytic functions on $\mathbb{D}$. If $|f(z)| < 1$ for $z\in\mathbb{D}$, then
\begin{equation}\label{1.1}
\sum_{n=0}^{\infty}|a_{n}||z|^{n}\leq1 \quad for \quad r\leq1/3,
\end{equation}
where $r=|z|$, and  the constant $1/3$ is the best possible.
The inequality(\ref{1.1})is known as the Bohr's inequality, the constant $1/3$ is called the Bohr radius. At the beginning, Bohr\cite{bohr1914theorem} proved the inequality for $r\leq1/6$ in 1914. Later, the sharp bound $r\leq1/3$ was proved by Riesz, Schur and Wiener independently(\cite{sidon1927satz,tomic1962theoreme}).

 Ali et al.\cite{ali2017note} considered Bohr's phenomenon for the classes of odd analytic functions.
 Kayumov and Ponnusamy   generalized the classical Bohr's inequality in\cite{kayumov2018improved} and explored  Bohr type inequalities for analytic functions with Lacunary series in\cite{kayumov2018bohr}.

Bohr's phenomenon and Bohr's inequality of harmonic functions were studied in \cite{evdoridis2019improved, evdoridis2021improved, allu2021bohr,liu2020bohr}. Furthermore,
 The same problem has been discussed for  several complex variables functions, see
\cite{aizenberg2000multidimensional, liu2021multidimensional, defant2006logarithmic, boas1997bohr}.
 In \cite{kumar2021bohr}, for complex integral operators of bounded analytic functions defined on unit disk $\mathbb{D}$, the  Bohr-type  phenomenon are considered.

At the beginning,  the Bohr radius were researched for mappings whose image domain bounded in  the unit disk $\mathbb{D}$.  Later,
it was discussed for classes of mappings whose image domain bounded in  other domains, for example, in convex domain (\cite{muhanna2011bohr's,muhanna2013bohr}),
wedge-domain  ( \cite{ali2017note}),  and
concave-wedge domain (\cite{muhanna2014bohr}).

In recent years, some bohr-type inequalities with one parameter for bounded analytic functions  were discussed, see  \cite{wu2022some, hu2021bohr, hu2022bohr, MR4257988}.\\

The following are some known results.
\begin{theorem}\label{theorem1.1}\rm{\cite{kayumov2017bohrrogosinski}}
Suppose that $f(z)=\sum_{k=0}^{\infty}a_{k}z^{k}$ is analytic in $\mathbb{D}$ and $|f(z)|<1$. Then
$$|f(z)|+\sum_{k=1}^{\infty}|a_{k}||z|^{k} \leq 1 \quad for \quad |z|=r\leq \sqrt{5}-2$$
and the radius $\sqrt{5}-2$ is the best possible. Moreover,
$$|f(z)|^2+\sum_{k=1}^{\infty}|a_{k}||z|^{k} \leq 1 \quad for \quad |z|=r\leq \frac{1}{3}$$
and the radius $\frac{1}{3}$ is the best possible.
\end{theorem}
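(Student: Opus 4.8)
The plan is to reduce both inequalities to elementary one-variable estimates in $r$ by invoking two classical facts about a self-map $f$ of $\mathbb{D}$. Writing $a := |a_0| = |f(0)| \in [0,1]$, the Schwarz--Pick lemma gives the pointwise bound $|f(z)| \le \frac{a+r}{1+ar}$ on $|z|=r$, while the coefficient inequality for bounded functions yields $|a_k| \le 1 - a^2$ for every $k \ge 1$. Summing the latter geometrically, $\sum_{k=1}^{\infty}|a_k|r^k \le (1-a^2)\frac{r}{1-r}$. Substituting both estimates turns each claimed inequality into a statement about an explicit function of the two real variables $a$ and $r$, and the task becomes showing that this function stays $\le 1$ on $[0,1]\times[0,r_0]$ for the appropriate radius $r_0$.

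For the first inequality I would study $\Phi(a,r) := \frac{a+r}{1+ar} + (1-a^2)\frac{r}{1-r}$. Using the identity $\frac{a+r}{1+ar} - 1 = -\frac{(1-a)(1-r)}{1+ar}$, the quantity $\Phi(a,r)-1$ factors as $(1-a)\bigl[\frac{(1+a)r}{1-r} - \frac{1-r}{1+ar}\bigr]$. Since $1-a \ge 0$, everything reduces to showing the bracket is nonpositive, i.e. $(1+a)r(1+ar) \le (1-r)^2$. The left side is increasing in $a$, so the extremal case is $a=1$, where the condition becomes $2r(1+r) \le (1-r)^2$, equivalently $r^2+4r-1 \le 0$, i.e. $r \le \sqrt5 - 2$.

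The second inequality follows the same pattern with $\Psi(a,r) := \bigl(\frac{a+r}{1+ar}\bigr)^2 + (1-a^2)\frac{r}{1-r}$. Here the key identity is $\bigl(\frac{a+r}{1+ar}\bigr)^2 - 1 = -\frac{(1-a^2)(1-r^2)}{(1+ar)^2}$, so that $\Psi(a,r)-1 = (1-a^2)\bigl[\frac{r}{1-r} - \frac{1-r^2}{(1+ar)^2}\bigr]$. Nonpositivity of the bracket is equivalent to $r(1+ar)^2 \le (1-r)^2(1+r)$; again the left side increases in $a$, and the worst case $a=1$ reduces, after cancelling $1+r$, to $r(1+r) \le (1-r)^2$, i.e. $3r \le 1$, giving $r \le 1/3$.

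For sharpness of both radii I would test the Blaschke factor $f(z) = \frac{a-z}{1-az}$, whose coefficients give $|a_k| = (1-a^2)a^{k-1}$ and whose modulus attains $\frac{a+r}{1+ar}$ at $z=-r$. Evaluating the two left-hand sides there produces, respectively, $\frac{a+r}{1+ar} + \frac{(1-a^2)r}{1-ar}$ and $\bigl(\frac{a+r}{1+ar}\bigr)^2 + \frac{(1-a^2)r}{1-ar}$; computing each minus $1$ and letting $a \to 1^-$, the bracketed factors tend to positive multiples of $r^2+4r-1$ and $3r-1$ respectively. Hence for any $r$ exceeding the claimed radius the inequality fails for $a$ close enough to $1$, proving optimality. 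The main thing to watch is the reduction to $a=1$: one must confirm that the relevant one-variable function is genuinely monotone in $a$ so that the endpoint really is extremal, and in the sharpness step one must handle the indeterminate product $(1-a^2)\cdot[\text{bracket}] \to 0$ by tracking the sign of the bracket rather than the vanishing product.
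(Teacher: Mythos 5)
Your argument is correct and complete. Note, however, that the paper does not prove Theorem \ref{theorem1.1} at all: it is quoted from Kayumov--Ponnusamy as a known result, and the closest the paper comes is Theorem \ref{theorem3.1}, whose case $p=1$, $\lambda=1$ recovers only the first half (the region II there stops at $p\le 3/2$, so the $|f|^2$ statement is not covered). Your route is the same overall strategy the paper uses for its generalizations --- Schwarz--Pick plus $|a_k|\le 1-a^2$, reduction to a two-variable function of $(a,r)$, and the Blaschke factor $\frac{a-z}{1-az}$ with $a\to 1^-$ for sharpness --- but with one genuine simplification: because $p=1$ and $p=2$ admit the exact factorizations $\frac{a+r}{1+ar}-1=-\frac{(1-a)(1-r)}{1+ar}$ and $\bigl(\frac{a+r}{1+ar}\bigr)^2-1=-\frac{(1-a^2)(1-r^2)}{(1+ar)^2}$, you never need the mean-value estimate (the paper's Lemma \ref{lemma2.5}) that handles $(a+r)^p-(1+ar)^p$ for general $p$, and the resulting bracket is manifestly monotone in $a$, so the reduction to $a=1$ is immediate. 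Two small points you flagged yourself and handled correctly: substituting the Schwarz--Pick bound into $x\mapsto x^2$ is legitimate because the coefficient sum does not depend on $|f(z)|$ and $x^2$ is increasing on $[0,\infty)$ (the paper needs its Lemma \ref{lemma2.3} only when the second term also involves $|f(z)|$, as in Theorem \ref{theorem3.2}); and in the sharpness step the conclusion rests on the sign of the bracket, not on the vanishing factor $1-a$, exactly as in the paper's limiting arguments.
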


\begin{theorem}\label{theorem1.2}\rm{\cite{liu2018bohr}}
Suppose that $N(\geq2)$ is an integer, $f(z)=\sum_{k=N}^{\infty}a_{k}z^{k}$ is analytic in $\mathbb{D}$ and $|f(z)|<1$. Then
$$|f(z)|+\sum_{k=N}^{\infty}|\frac{{f^{(k)}(z)}}{k!}|r^{k}\leq 1$$
for $|z|=r\leq R_{N}$,
where $R_{N}$ is the minimum positive root of the equation $(1+r)(1-2r)(1-r)^{N-1}-2r^{N}=0$. The radius $R_{N}$ is the best possible.
\end{theorem}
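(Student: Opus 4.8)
The plan is to run the standard Schwarz--Pick machinery for the derivatives of a bounded function, reduce everything to a one--variable inequality in $a=|f(z)|$, and then read off the condition $r\le R_N$ from the defining equation of $R_N$. By rotation we may assume $z=r\in[0,1)$, since replacing $f(\zeta)$ by $f(e^{i\theta}\zeta)$ leaves each $|f^{(k)}(z)|$ and $|f(z)|$ unchanged and preserves the class. The engine is the sharp higher--order Schwarz--Pick (Ruscheweyh) estimate: for any analytic $\omega:\mathbb{D}\to\overline{\mathbb{D}}$, every $k\ge 1$ and $z\in\mathbb{D}$,
$$\frac{|\omega^{(k)}(z)|}{k!}\le \frac{1-|\omega(z)|^2}{(1-|z|^2)(1-|z|)^{k-1}}=\frac{1-|\omega(z)|^2}{(1+|z|)(1-|z|)^{k}}.$$
If this is not simply quoted, I would reprove it by fixing $z=r$, putting $\phi(\zeta)=\frac{\zeta+r}{1+r\zeta}$ and $G=f\circ\phi:\mathbb{D}\to\overline{\mathbb{D}}$, whose coefficients satisfy the classical bound $|g_m|\le 1-|g_0|^2$ with $g_0=f(r)$; expanding $\psi=\phi^{-1}$ about $r$ and reading off the coefficient of $(w-r)^k$ in $f(w)=G(\psi(w))$, the phase--aligned maximum is the coefficient of $(w-r)^k$ in $(1-|g_0|^2)\,\frac{\psi}{1-\psi}$, and the identity $\frac{\psi}{1-\psi}=\frac{w-r}{(1+r)(1-w)}$ gives exactly the constant $\frac{1}{(1+r)(1-r)^k}$.

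Next I would substitute this into the sum. Writing $a=|f(z)|=|f(r)|$, the estimate gives
$$\sum_{k=N}^{\infty}\Big|\tfrac{f^{(k)}(z)}{k!}\Big|r^k\le \frac{1-a^2}{1+r}\sum_{k=N}^{\infty}\Big(\frac{r}{1-r}\Big)^{k}=\frac{1-a^2}{1+r}\cdot\frac{r^N}{(1-r)^{N-1}(1-2r)}=\frac{(1-a^2)\,r^N}{(1+r)(1-2r)(1-r)^{N-1}},$$
where the geometric series converges because $R_N<\tfrac12$ (the term $(1-2r)$ forces this, and indeed $\Phi(\tfrac12)<0<\Phi(0)$). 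Denoting $D(r)=(1+r)(1-2r)(1-r)^{N-1}$ and $C=r^N/D(r)$, the left--hand side of the claim is bounded by $a+(1-a^2)C$.

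The final reduction is elementary: $a+(1-a^2)C\le 1$ is equivalent, after dividing by $1-a>0$, to $(1+a)C\le 1$, whose worst case over $a\in[0,1)$ is the limit $a\to1$, giving $2C\le 1$, i.e. $2r^N\le D(r)$, i.e. $\Phi(r):=(1+r)(1-2r)(1-r)^{N-1}-2r^N\ge 0$. Since $\Phi(0)=1>0$ and $R_N$ is by definition the least positive root of $\Phi$, continuity forces $\Phi\ge 0$ on $[0,R_N]$, which proves the inequality for $r\le R_N$.

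For sharpness I would track the equality chain: at $r=R_N$ one has $C=\tfrac12$, so the optimizer $a^\ast=\tfrac{1}{2C}$ of $a+(1-a^2)C$ equals $1$ exactly at $R_N$, and for $r>R_N$ (where $C>\tfrac12$) the maximal value becomes $C+\tfrac{1}{4C}>1$. Thus the bound $1$ is approached by functions that make the Ruscheweyh estimate simultaneously near--extremal for all $k\ge N$ while $|f(z)|\to a^\ast$, and cannot hold beyond $R_N$. I expect this to be the main obstacle: the equality cases of the higher--order Schwarz--Pick inequality depend on $k$, so no single Blaschke factor renders the whole series extremal, and one must instead exhibit an explicit one--parameter family (a suitable Blaschke--type function with $|f(z)|$ tending to the critical value, letting the phases of the coefficients of $f\circ\phi$ align in the limit) and verify directly that the full expression tends to $1$ as $r\to R_N$ while exceeding $1$ for $r>R_N$.
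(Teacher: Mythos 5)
First, a caveat: this paper does not actually prove Theorem \ref{theorem1.2} --- it is quoted from \cite{liu2018bohr} as background, and the closest argument written out here is the proof of Theorem \ref{theorem3.2}. Your treatment of the inequality itself is correct and is exactly that method: your higher-order Schwarz--Pick bound $|f^{(k)}(z)|/k!\le (1-|f(z)|^2)/\bigl((1+r)(1-r)^k\bigr)$ is the paper's Lemma \ref{lemma2.4} (Dai--Pan), the geometric-series summation producing $C=r^N/\bigl((1+r)(1-2r)(1-r)^{N-1}\bigr)$ is the same computation the paper performs for $N=1$, and the reduction $a+(1-a^2)C\le 1\iff (1+a)C\le 1$, hence to $2C\le 1$, i.e. $\Phi(r)\ge 0$ on $[0,R_N]$, is sound. (Two assertions you leave unproved: that $R_N<1/2$, and that $C>1/2$ for $r>R_N$; both follow from $\Phi(1/2)<0$ together with the strict monotonicity of $\Phi$ on $(0,1/2)$, which should be verified rather than assumed, since ``minimum positive root'' alone does not rule out a double root.)

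The genuine gap is sharpness. Showing that $\max_a\,[a+(1-a^2)C]=C+1/(4C)>1$ for $C>1/2$ only proves that your \emph{upper bound} fails beyond $R_N$, not that any admissible $f$ violates the inequality, and you explicitly defer the construction of an extremal family. In fact no delicate ``simultaneous near-extremality over all $k$'' argument is needed: for $f(z)=(a-z)/(1-az)$ one has the exact identity $|f^{(k)}(r)|/k!=(1-a^2)a^{k-1}/(1-ar)^{k+1}$ (this is how the paper obtains (\ref{3.9})), so the tail sums in closed form to $(1-a^2)a^{N-1}r^N/\bigl((1-ar)^N(1-2ar)\bigr)$, and the violation condition reduces, after cancelling a factor of $1-a$, to $(1+a)a^{N-1}r^N>(1+r)(1-2ar)(1-ar)^{N-1}$, which as $a\to 1^-$ becomes precisely $\Phi(r)<0$. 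There is, however, a second and more serious obstruction to your plan: if one reads the hypothesis literally as $f(z)=\sum_{k\ge N}a_kz^k$, then Schwarz's lemma forces $|f(z)|\le r^N$, so your worst case $a\to 1$ is unattainable, the M\"obius extremal is not in the class, and combining Dai--Pan with Schwarz gives $|f(z)|+\sum_{k\ge N}|f^{(k)}(z)/k!|\,r^k\le r^N+(1-r^{2N})C<3/4$ at $r=R_N$, so sharpness cannot hold for that class at all. The theorem is evidently intended for general bounded $f=\sum_{k\ge 0}a_kz^k$ with only the displayed derivative sum starting at $N$ (consistent with the paper's remark that Theorem \ref{theorem3.2} reduces to it when $p=\lambda=1$); your proof of the inequality is indifferent to this distinction, but any correct sharpness argument must work in the unrestricted class.
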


\begin{theorem}\label{theorem1.3}\rm{\cite{wu2022some}}
Suppose that $f(z)$ is analytic in $\mathbb{D}$ and $|f(z)| < 1$. Then for arbitrary $\lambda\in (0, +\infty)$, it holds that
\begin{equation*}
|f(z)|+\lambda \sum_{k = 1}^\infty  |a_{k}|r^{k}  \leq 1
\quad
for \quad r \leq R,
\end{equation*}
where $R$ is the unique  root in $(0,1)$ of the equation
$$2r\lambda(1+r)-(1-r)^2=0$$
and the radius $R$ is the best possible.
\end{theorem}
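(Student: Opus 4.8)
The plan is to reduce the assertion to two classical estimates for a bounded analytic self-map of $\mathbb{D}$ and then to an elementary one-variable optimization. Write $f(z)=\sum_{k=0}^{\infty}a_{k}z^{k}$ and set $a:=|a_{0}|=|f(0)|\in[0,1)$. The two ingredients I would invoke are: (i) the classical coefficient estimate $|a_{k}|\le 1-a^{2}$ valid for every $k\ge 1$ (Wiener's lemma for self-maps of $\mathbb{D}$), and (ii) the Schwarz--Pick bound $|f(z)|\le (a+r)/(1+ar)$ for $|z|=r$. Using (i) to dominate the tail by a geometric series,
$$\lambda\sum_{k=1}^{\infty}|a_{k}|r^{k}\le \lambda(1-a^{2})\sum_{k=1}^{\infty}r^{k}=\lambda(1-a^{2})\frac{r}{1-r},$$
and combining with (ii) gives
$$|f(z)|+\lambda\sum_{k=1}^{\infty}|a_{k}|r^{k}\le \frac{a+r}{1+ar}+\lambda(1-a^{2})\frac{r}{1-r}=:\Phi_{r}(a).$$
It then suffices to prove $\Phi_{r}(a)\le 1$ for all $a\in[0,1]$ whenever $r\le R$.

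For the optimization I would subtract $1$ and use the identity $\frac{a+r}{1+ar}-1=-\frac{(1-a)(1-r)}{1+ar}$, which allows me to factor out the nonnegative quantity $(1-a)$:
$$\Phi_{r}(a)-1=(1-a)\left[\lambda(1+a)\frac{r}{1-r}-\frac{1-r}{1+ar}\right].$$
Thus $\Phi_{r}(a)\le 1$ is equivalent to the bracket being nonpositive, i.e. to $\lambda r(1+a)(1+ar)\le(1-r)^{2}$. Since $(1+a)(1+ar)$ is increasing in $a$ on $[0,1]$, its maximum is attained at $a=1$, and the requirement collapses to $2\lambda r(1+r)\le(1-r)^{2}$, that is $g(r):=2\lambda r(1+r)-(1-r)^{2}\le 0$. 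The function $g$ is a sum of two functions increasing on $(0,1)$, namely $2\lambda r(1+r)$ and $-(1-r)^{2}$, and it satisfies $g(0)=-1<0$ and $g(1)=4\lambda>0$; hence it has a unique root $R\in(0,1)$ and $g(r)\le 0$ exactly for $r\le R$. This establishes the inequality.

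For sharpness I would test the Möbius automorphism $\varphi_{a}(z)=\frac{a-z}{1-az}$ with $a\in(0,1)$, whose Taylor coefficients satisfy $|a_{0}|=a$ and $|a_{k}|=a^{k-1}(1-a^{2})$ for $k\ge 1$, and evaluate at the point $z=-r$, where $|\varphi_{a}(-r)|=\frac{a+r}{1+ar}$ is maximal on $|z|=r$. This gives
$$|\varphi_{a}(-r)|+\lambda\sum_{k=1}^{\infty}|a_{k}|r^{k}=\frac{a+r}{1+ar}+\lambda(1-a^{2})\frac{r}{1-ar}.$$
Writing $a=1-\varepsilon$ and expanding to first order as $\varepsilon\to 0^{+}$, the constant term is $1$ and the coefficient of $\varepsilon$ turns out to equal $\frac{g(r)}{(1-r)(1+r)}$. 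Consequently, for any $r>R$ one has $g(r)>0$, so the left-hand side exceeds $1$ once $\varepsilon$ is small enough, showing that $R$ cannot be enlarged. I expect the only genuinely delicate point to be this sharpness asymptotic: one must expand $\frac{a+r}{1+ar}$ and $(1-a^{2})\frac{r}{1-ar}$ consistently to first order in $\varepsilon$ and verify that the resulting $O(\varepsilon)$ coefficient reproduces exactly the defining polynomial $g(r)$ (note the harmless mismatch between the majorant's $\frac{r}{1-r}$ and the extremal's $\frac{r}{1-ar}$, which coincide in the limit $a\to 1$). The forward inequality itself is routine once the factorization by $(1-a)$ is spotted.
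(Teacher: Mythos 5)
Your proof is correct and follows essentially the same route as the paper's (the $p=1$ case of its Theorem \ref{theorem3.1}): the coefficient bound $|a_k|\le 1-a^2$ plus Schwarz--Pick, factoring out $(1-a)$, maximizing over $a$ at $a=1$ to reduce to $2\lambda r(1+r)-(1-r)^2\le 0$, and testing $z\mapsto\frac{a-z}{1-az}$ at $z=-r$ for sharpness. Your first-order expansion in $\varepsilon=1-a$ is just a cleaner packaging of the paper's limiting argument as $a\to 1^-$, and the $O(\varepsilon)$ coefficient you compute is indeed $g(r)/((1-r)(1+r))$, so the sharpness step checks out.
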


In this paper,  the above Theorem \ref{theorem1.1}, \ref{theorem1.2} and \ref{theorem1.3}  are generalized to the form with two parameters.
The introduction of two parameters increase the difficulty of the corresponding problems,  especially to find the sharp bound of the Bohr-type radii. Sometimes, we conjecture  a radius is sharp. But we can not to verify the corresponding extreme functions.  Even so,  some of our results in this paper are still sharp.

\section{ Some Lemmas}
In order to prove our main results, we need the following lemmas at first.
\vskip 0.15in
\begin{lemma}\rm(Schwarz-Pick lemma)  Let $\phi(z) $ be analytic in the unit disk $\mathbb{D}$ and $|\phi(z)|<1$. Then
$$\frac{|\phi(z_{1})-\phi(z_{2})|}{|1-\overline{\phi(z_{1})}\phi(z_{2})|}\leq \frac{|z_{1}-z_{2}|}{|1-\overline{z_{1}}z_{2}|}\quad for\quad z_{1},z_{2}\in\mathbb{D},$$
and equality holds for distinct $z_{1},z_{2}\in \mathbb{D}$ if and only if $\phi$ is a M\"{o}bius transformation.
In particularly,
$$|\phi'(z)|\leq\frac{1-|\phi(z)|^{2}}{1-|z|^{2}}\quad for \quad z\in\mathbb{D},$$
and equality holds for some $z\in \mathbb{D}$ if and only if $\phi$ is a M\"{o}bius transformation.
\end{lemma}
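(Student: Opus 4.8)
The plan is to deduce the Schwarz–Pick lemma from the classical Schwarz lemma by pre- and post-composing $\phi$ with disk automorphisms so that the two chosen points are carried to the origin, where the classical estimate applies directly. For $a\in\mathbb{D}$ write $\psi_a(z)=(z-a)/(1-\bar a z)$ for the standard M\"{o}bius automorphism of $\mathbb{D}$; it maps $\mathbb{D}$ onto $\mathbb{D}$, satisfies $\psi_a(a)=0$, and has analytic inverse $\psi_a^{-1}=\psi_{-a}$. The one algebraic identity I will lean on throughout is $1-|\psi_a(z)|^2=(1-|a|^2)(1-|z|^2)/|1-\bar a z|^2$, obtained by expanding $|1-\bar a z|^2-|z-a|^2$.

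First I would fix distinct $z_1,z_2\in\mathbb{D}$, set $w_1=\phi(z_1)$, and form the composite $g=\psi_{w_1}\circ\phi\circ\psi_{z_1}^{-1}$. Each factor sends $\mathbb{D}$ into $\mathbb{D}$ and $\phi(\mathbb{D})\subset\mathbb{D}$, so $g$ is analytic on $\mathbb{D}$ with $|g|<1$, and by construction $g(0)=\psi_{w_1}(\phi(z_1))=0$. The classical Schwarz lemma then yields $|g(w)|\le|w|$ for all $w\in\mathbb{D}$. Substituting $w=\psi_{z_1}(z_2)$, a direct computation gives $g(w)=\psi_{w_1}(\phi(z_2))=(\phi(z_2)-\phi(z_1))/(1-\overline{\phi(z_1)}\phi(z_2))$ while $w=(z_2-z_1)/(1-\bar z_1 z_2)$, so $|g(w)|\le|w|$ is exactly the asserted pseudohyperbolic estimate. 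For the infinitesimal form I would recast the main inequality as $|\phi(z_2)-\phi(z_1)|/|z_2-z_1|\le |1-\overline{\phi(z_1)}\phi(z_2)|/|1-\bar z_1 z_2|$ and let $z_2\to z_1=:z$; the left-hand side tends to $|\phi'(z)|$ and the right-hand side to $(1-|\phi(z)|^2)/(1-|z|^2)$, which is the derivative bound.

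The equality analysis is where the genuine work lies. Here I would invoke the rigidity part of the classical Schwarz lemma: equality $|g(w_0)|=|w_0|$ at some $w_0\ne 0$ (equivalently $|g'(0)|=1$) forces $g(w)=e^{i\theta}w$ for some real $\theta$. Unwinding the composition expresses $\phi=\psi_{w_1}^{-1}\circ(e^{i\theta}\,\cdot\,)\circ\psi_{z_1}$ as a composite of M\"{o}bius maps, hence a M\"{o}bius transformation of $\mathbb{D}$; conversely, any such $\phi$ saturates the inequality, as one checks using the identity displayed above. The same dichotomy transfers to the infinitesimal case in the limit $z_2\to z_1$. The main obstacle I anticipate is not any single estimate but the careful bookkeeping of the composition $\psi_{w_1}\circ\phi\circ\psi_{z_1}^{-1}$ together with the verification that the equality condition propagates cleanly between the finite and the differentiated forms; everything else reduces to the classical Schwarz lemma and routine M\"{o}bius algebra.
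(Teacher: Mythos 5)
Your argument is correct: it is the canonical derivation of the Schwarz--Pick lemma from the classical Schwarz lemma, obtained by conjugating $\phi$ with the disk automorphisms $\psi_{z_1}$ and $\psi_{\phi(z_1)}$ so that the composite $g=\psi_{\phi(z_1)}\circ\phi\circ\psi_{z_1}^{-1}$ fixes the origin, and then reading off both the pseudohyperbolic inequality and (via the limit $z_2\to z_1$) the derivative bound, with the equality cases supplied by Schwarz rigidity. The paper itself offers no proof of this lemma --- it is stated as a known classical result used as a tool --- so there is no in-paper argument to compare against; your write-up fills that gap correctly, and the only detail worth making explicit in a final version is the unwinding showing that equality at a single pair of distinct points (or at a single point in the infinitesimal form) already forces $g(w)=e^{i\theta}w$ and hence that $\phi$ is a disk automorphism, which you have indicated accurately.
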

\begin{lemma}\rm\label{lemma2.2} {(\cite{MR2017933})} { Suppose $f(z)=\sum_{n=0}^{\infty}a_{n}z^{n}$ is analytic in the unit disk $\mathbb{D}$ and $|f(z)|\leq1$. Then $|a_{n}|\leq1-|a_{0}|^2$ for all $n\in\mathbb{N}$.}
\end{lemma}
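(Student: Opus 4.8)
The plan is to reduce the bound on the single Taylor coefficient $a_n$ to the derivative form of the Schwarz--Pick lemma (Lemma~2.1) by averaging $f$ over $n$-th roots of unity, an operation that isolates the subseries $a_0 + a_n\zeta + a_{2n}\zeta^2+\cdots$. First I would dispose of the trivial case $|a_0|=1$: by the maximum modulus principle $f$ is then a unimodular constant, so $a_n=0$ for every $n\ge 1$ and the inequality holds with both sides nonnegative. Henceforth assume $|a_0|<1$ and fix an integer $n\ge 1$.

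Next, set $\omega=e^{2\pi i/n}$ and define
$$\Phi(z)=\frac{1}{n}\sum_{k=0}^{n-1} f(\omega^{k}z),\qquad z\in\mathbb{D}.$$
Since each $\omega^{k}z$ lies in $\mathbb{D}$ and $|f|\le 1$, the triangle inequality gives $|\Phi(z)|\le 1$ on $\mathbb{D}$. The averaging acts as a filter on the power series: inserting $f(\omega^k z)=\sum_{m\ge 0}a_m\omega^{km}z^m$ and using that $\frac{1}{n}\sum_{k=0}^{n-1}\omega^{km}$ equals $1$ when $n\mid m$ and $0$ otherwise, one finds $\Phi(z)=\sum_{j\ge 0}a_{jn}z^{jn}$. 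In particular $\Phi(\omega z)=\Phi(z)$, so $\Phi$ depends on $z$ only through $z^{n}$, and I may define $\psi(\zeta)=\sum_{j\ge 0}a_{jn}\zeta^{j}$, which satisfies $\Phi(z)=\psi(z^{n})$. Because $z^{n}$ sweeps out all of $\mathbb{D}$ as $z$ does, $\psi$ is analytic on $\mathbb{D}$ and $|\psi(\zeta)|=|\Phi(z)|\le 1$ there.

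Finally I would read off $\psi(0)=a_0$ and $\psi'(0)=a_n$ and invoke the derivative form of the Schwarz--Pick lemma at the origin, which yields
$$|a_n|=|\psi'(0)|\le\frac{1-|\psi(0)|^{2}}{1-0}=1-|a_0|^{2},$$
as claimed. The conceptual step carrying the whole argument, and the one I expect to be the only genuine obstacle, is recognizing that the roots-of-unity average produces a bona fide Schur-class function $\psi$ of the new variable $\zeta=z^{n}$ whose first two Taylor data are precisely $a_0$ and $a_n$; once this reduction is in place, the estimate is immediate from Lemma~2.1. I would explicitly warn against the naive route that instead applies the disk automorphism $w\mapsto(w-a_0)/(1-\overline{a_0}w)$ to send $f(0)$ to $0$ and then uses the ordinary Schwarz lemma: that controls only function values, not the higher coefficients after the automorphism is undone, and in fact recovers merely $|a_n|\le 1$. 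It is the filtering trick that upgrades this to the sharp factor $1-|a_0|^{2}$.
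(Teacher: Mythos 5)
Your argument is correct. The paper offers no proof of this lemma --- it is quoted from Graham--Kohr with only a citation --- so there is nothing internal to compare against; what you have written is the classical Wiener argument (average over $n$-th roots of unity to filter out the subseries $\sum_j a_{jn}z^{jn}$, pass to the Schur-class function $\psi(\zeta)=\sum_j a_{jn}\zeta^j$, and apply the derivative form of Schwarz--Pick at the origin), and every step checks out: the convergence and boundedness of $\psi$ on $\mathbb{D}$ follow as you say because every point of $\mathbb{D}$ has an $n$-th root in $\mathbb{D}$, and the constant/degenerate cases are handled. One small quibble with your closing remark: the M\"obius route via $g=(f-a_0)/(1-\overline{a_0}f)$ is not as useless as you suggest --- for $n=1$ it gives exactly $|g'(0)|=|a_1|/(1-|a_0|^2)\le 1$, i.e.\ the sharp bound; it is only for $n\ge 2$ that the composition scrambles the coefficients and the averaging trick becomes essential.
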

\begin{lemma}\rm\label{lemma2.3} Let $p\in (0,2]$, $A\in(0, p/2]$ and $x\in [0,1)$. Then
\begin{equation*}\label{2.1}
\Phi'(x)>0 \quad where \quad \Phi(x)=x^{p}+A(1-x^2).
\end{equation*}

The proof is simple. We omit it.
\end{lemma}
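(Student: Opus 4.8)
The plan is to reduce everything to the sign of the first derivative and exploit the two constraints $p\le 2$ and $A\le p/2$ in tandem. First I would differentiate directly, obtaining $\Phi'(x)=px^{p-1}-2Ax$ for $x\in(0,1)$, and then pull out the factor $x^{p-1}$ to rewrite this as $\Phi'(x)=x^{p-1}\bigl(p-2Ax^{2-p}\bigr)$. The reason for grouping the terms this way is that the exponent $2-p$ is nonnegative (since $p\le 2$), so the nontrivial factor $p-2Ax^{2-p}$ stays bounded as $x\to0^+$, while the prefactor $x^{p-1}$ is strictly positive on $(0,1)$; this isolates the entire sign question in the bracket.

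Next I would control the bracket. Because $2-p\ge 0$ and $x\in[0,1)$, the map $x\mapsto x^{2-p}$ is nondecreasing and bounded above by $1$ on the interval, so $2Ax^{2-p}\le 2A$. Invoking the hypothesis $A\le p/2$, i.e.\ $2A\le p$, then gives $p-2Ax^{2-p}\ge p-2A\ge 0$. Combined with $x^{p-1}>0$ this already yields $\Phi'(x)\ge 0$ on $(0,1)$, and strictness follows as soon as one of the two estimates is strict: for $x>0$ and $p<2$ one has $x^{2-p}<1$, forcing $2Ax^{2-p}<2A\le p$ and hence a strictly positive bracket; alternatively, if $A<p/2$ the bracket is strictly positive regardless of $x$.

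The points I would watch most carefully are the boundary behaviours, which is presumably why the authors regard the computation as simple yet still worth recording. At $x=0$ the derivative vanishes whenever $p>1$, so strict positivity genuinely lives on the open interval; and the only truly degenerate configuration is $p=2$ together with $A=p/2=1$, where $\Phi(x)\equiv 1$ and $\Phi'\equiv 0$. Thus the substantive content is that $\Phi'>0$ throughout $(0,1)$ outside this single extreme case, and the monotonicity of $\Phi$ that the later theorems actually invoke is covered by $\Phi'\ge 0$ in all cases. I would therefore present the factorization $\Phi'(x)=x^{p-1}\bigl(p-2Ax^{2-p}\bigr)$ as the heart of the argument and let the two one-line estimates on $x^{2-p}\le 1$ and $2A\le p$ close it.
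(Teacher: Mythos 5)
Your computation is correct, and since the paper omits the proof entirely ("The proof is simple. We omit it."), your factorization $\Phi'(x)=x^{p-1}\bigl(p-2Ax^{2-p}\bigr)$ together with the two estimates $x^{2-p}\le 1$ and $2A\le p$ is surely the argument the authors had in mind; there is no alternative route in the paper to compare against. Your boundary remarks are a genuine (if minor) improvement on the statement itself: as written the lemma fails at $x=0$ when $p>1$ (where $\Phi'(0)=0$), is not literally about a derivative at $x=0$ when $p<1$, and fails identically in the degenerate case $p=2$, $A=1$, where $\Phi\equiv 1$. You are also right that none of this matters downstream -- in Theorems 3.2 and 3.4 the lemma is only invoked to replace $|f(z)|$ by its Schwarz--Pick upper bound inside $\Phi$, for which the nondecreasing property $\Phi'\ge 0$ on $[0,1)$ suffices, and that holds in all the cases covered by the hypotheses.
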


\begin{lemma}\rm\label{lemma2.4}{\rm(\cite{dai2008note})} { Suppose $f(z)$ is analytic in the unit disk $\mathbb{D}$ and $|f(z)|\leq1$. Then for all $k\in\mathbb{N}$, it holds that $|f^{(k)}(z)|\leq\frac{(1-|f(z)|^2)k!}{(1-|z|^2)^k}(1+|z|)^{k-1}$ for $|z|<1$.}
\end{lemma}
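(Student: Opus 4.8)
The plan is to reduce this higher-order estimate to the single coefficient bound of Lemma \ref{lemma2.2} by pulling the problem back to the origin through a disk automorphism, so that no analytic input beyond Lemma \ref{lemma2.2} is needed. First I would fix $z_0\in\mathbb{D}$ and introduce the M\"obius automorphism $\phi_{z_0}(w)=\dfrac{z_0+w}{1+\overline{z_0}\,w}$ of $\mathbb{D}$, which satisfies $\phi_{z_0}(0)=z_0$. Then $g(w):=f(\phi_{z_0}(w))$ is analytic on $\mathbb{D}$ with $|g(w)|\le 1$ and $g(0)=f(z_0)$. Writing $g(w)=\sum_{n=0}^{\infty}b_nw^n$, Lemma \ref{lemma2.2} gives at once
\[
|b_n|\le 1-|g(0)|^2=1-|f(z_0)|^2,\qquad n\ge 1.
\]

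The next step is to transfer these bounds to the Taylor coefficients of $f$ at $z_0$. Since $\phi_{z_0}^{-1}(z)=\dfrac{z-z_0}{1-\overline{z_0}\,z}$, we have $f(z)=\sum_{n=0}^{\infty}b_n\Big(\dfrac{z-z_0}{1-\overline{z_0}\,z}\Big)^n$. Putting $t=z-z_0$, a direct computation gives
\[
\frac{z-z_0}{1-\overline{z_0}\,z}=\frac{t}{(1-|z_0|^2)\Big(1-\frac{\overline{z_0}}{1-|z_0|^2}\,t\Big)},
\]
so that $\big(\phi_{z_0}^{-1}(z)\big)^n$ begins at order $t^n$ and the coefficient of $t^k$ in $f$, which equals $f^{(k)}(z_0)/k!$, receives contributions only from $n=1,\dots,k$. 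Expanding $\big(1-\frac{\overline{z_0}}{1-|z_0|^2}t\big)^{-n}$ and collecting the $t^{k-n}$ term, I expect to obtain the identity
\[
\frac{f^{(k)}(z_0)}{k!}=\frac{1}{(1-|z_0|^2)^k}\sum_{n=1}^{k}b_n\binom{k-1}{n-1}\overline{z_0}^{\,k-n}.
\]

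Finally I would apply the triangle inequality with the bound $|b_n|\le 1-|f(z_0)|^2$, and then substitute $m=n-1$ and invoke the binomial theorem:
\[
\frac{|f^{(k)}(z_0)|}{k!}\le\frac{1-|f(z_0)|^2}{(1-|z_0|^2)^k}\sum_{m=0}^{k-1}\binom{k-1}{m}|z_0|^{k-1-m}=\frac{\big(1-|f(z_0)|^2\big)(1+|z_0|)^{k-1}}{(1-|z_0|^2)^k}.
\]
Multiplying by $k!$ yields the claim; as a sanity check, the case $k=1$ recovers exactly the Schwarz--Pick derivative estimate. The main obstacle is the bookkeeping in the coefficient-extraction step: one must verify carefully that the $(z-z_0)^k$ coefficient of $\sum_n b_n\big(\phi_{z_0}^{-1}(z)\big)^n$ collapses to the stated binomial sum over $n=1,\dots,k$. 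Once that identity is secured, the estimate is immediate from the triangle inequality and the binomial theorem, with Lemma \ref{lemma2.2} as the only substantive ingredient.
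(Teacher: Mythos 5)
Your proof is correct: the paper states this lemma without proof, citing Dai and Pan, and your argument (conjugating by the automorphism $\phi_{z_0}$, applying Lemma \ref{lemma2.2} to get $|b_n|\le 1-|f(z_0)|^2$, extracting the $(z-z_0)^k$ coefficient to obtain $\frac{f^{(k)}(z_0)}{k!}=\frac{1}{(1-|z_0|^2)^k}\sum_{n=1}^{k}b_n\binom{k-1}{n-1}\overline{z_0}^{\,k-n}$, and summing the binomial coefficients) is essentially the standard proof given in that reference. The coefficient-extraction identity you flag as the main obstacle does check out — term-by-term integration is justified by uniform convergence of $\sum b_n(\phi_{z_0}^{-1}(z))^n$ on a small circle about $z_0$, and each $(\phi_{z_0}^{-1}(z))^n$ with $n>k$ vanishes to order $n>k$ there.
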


\begin{lemma}\rm\label{lemma2.5} Let $x \geq y>0$. Then  it holds that
\begin{equation}\label{2.1}
 \alpha x^{\alpha-1}(x-y)\leq x^{\alpha}-y^{\alpha}
\end{equation}   for $0<\alpha \leq 1$  and
\begin{equation}\label{2.2}
 \alpha y^{\alpha-1}(x-y)\leq x^{\alpha}-y^{\alpha}
\end{equation} for $1<\alpha$.

\begin{proof} We divide it into several cases to discuss.

Case 1: If $x=y$ or $\alpha=1$.  It is obvious.

Case 2: If $y<x$.   Let $g(t)=t^{\alpha}, \quad t\in (0,\infty)$.
Then Mean-Value Theorem  implies that there exists $\xi\in (y, x)$, such that
\begin{align}\label{2.4}
g(x)-g(y)=x^{\alpha}-y^{\alpha}=\alpha \xi^{\alpha-1}(x-y).
\end{align}

If $\alpha\in(0,1)$, then $x^{\alpha-1}<\xi^{\alpha-1}$.
It follows that  \begin{align*}
\alpha x^{\alpha-1}(x-y)< x^{\alpha}-y^{\alpha}.
\end{align*}

If $\alpha\in(1,\infty)$, then  $y^{\alpha-1}<\xi^{\alpha-1}$. Thus equality (\ref{2.4}) implies that
\begin{align*}
\alpha y^{\alpha-1}(x-y)< x^{\alpha}-y^{\alpha}.
\end{align*}
\end{proof}
\end{lemma}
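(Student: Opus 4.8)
The plan is to reduce both inequalities to a single application of the Mean-Value Theorem to the power function $g(t)=t^{\alpha}$ on the interval $[y,x]$, and then to exploit the monotonicity of $t\mapsto t^{\alpha-1}$, whose direction is governed entirely by the sign of $\alpha-1$.

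First I would dispose of the trivial cases: if $x=y$ both sides vanish, and if $\alpha=1$ both inequalities collapse to the identity $x-y\le x-y$. So I may assume $x>y>0$ and $\alpha\neq 1$. Applying the Mean-Value Theorem to $g(t)=t^{\alpha}$ on $[y,x]$ produces a point $\xi\in(y,x)$ with
$$x^{\alpha}-y^{\alpha}=\alpha\,\xi^{\alpha-1}(x-y).$$
Everything then rests on comparing $\xi^{\alpha-1}$ with the relevant endpoint power $x^{\alpha-1}$ or $y^{\alpha-1}$.

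Next I would split on the sign of $\alpha-1$. When $0<\alpha<1$ the exponent $\alpha-1$ is negative, so $t\mapsto t^{\alpha-1}$ is strictly decreasing; since $\xi<x$ this gives $\xi^{\alpha-1}>x^{\alpha-1}$, and substituting into the displayed equality yields $x^{\alpha}-y^{\alpha}>\alpha x^{\alpha-1}(x-y)$, which is the first claim (\ref{2.1}). When $\alpha>1$ the exponent $\alpha-1$ is positive, so $t\mapsto t^{\alpha-1}$ is strictly increasing; since $\xi>y$ this gives $\xi^{\alpha-1}>y^{\alpha-1}$, and the same substitution yields $x^{\alpha}-y^{\alpha}>\alpha y^{\alpha-1}(x-y)$, which is the second claim (\ref{2.2}). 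In each non-trivial case the strict inequality obtained of course implies the non-strict inequality asserted.

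I expect no serious obstacle here; the only point requiring genuine care is pairing the correct endpoint ($x$ in the $0<\alpha<1$ regime, $y$ in the $\alpha>1$ regime) with the mean-value point $\xi$, so that the monotonicity of $t^{\alpha-1}$ pushes the inequality in the intended direction. An equivalent and arguably cleaner route would replace the Mean-Value Theorem by the tangent-line characterization of convexity: $t^{\alpha}$ is concave for $0<\alpha<1$ and convex for $\alpha>1$, so the graph lies below (resp.\ above) its tangent line at $x$ (resp.\ at $y$), and evaluating that tangent inequality at the other endpoint delivers both estimates at once, albeit in non-strict form throughout.
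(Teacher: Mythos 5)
Your proposal is correct and follows essentially the same route as the paper: handle the trivial cases $x=y$ and $\alpha=1$ separately, apply the Mean-Value Theorem to $g(t)=t^{\alpha}$ to obtain $\xi\in(y,x)$, and compare $\xi^{\alpha-1}$ with $x^{\alpha-1}$ or $y^{\alpha-1}$ according to the monotonicity of $t\mapsto t^{\alpha-1}$. The alternative tangent-line/convexity remark is a nice observation but does not change the substance.
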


\begin{lemma}\rm\label{lemma2.6} It holds that
$$\left\{ \begin{array}{*{20}{c}}
{r_{1,t}<r_{2,t} }, \quad for \quad t \in ( 0, t_{*})\\
{r_{1,t}>r_{2,t} },  \quad for \quad t \in  (t_{*}, 1)
\end{array}
\right.$$
where $r_{1,t}=\frac{t-2+\sqrt{9t^2-4t+4}}{4t}$, $r_{2,t}=\frac{4-t-\sqrt{t^2-8t+8}}{4}$ and $t_{*}$ is the unique positive real root of the equation $$t^4-8t^3+7t^2+4t-4=0$$ for $t\in (0,1)$.
\begin{proof}
If $r_{1,t}-r_{2,t}<0$, then we have \begin{align*}\sqrt{9t^2-4t+4}+t\sqrt{t^2-8t+8}<-t^2+3t+2,\end{align*} that is \begin{align*}\sqrt{9t^4-76t^3+108t^2-64t+32}<t^2-6t+8.\end{align*}
It follows that $$t^4-8t^3+7t^2+4t-4<0.$$
Let $$h(t)=t^4-8t^3+7t^2+4t-4.$$
It is sufficient to show that $h(r)<0$ holds for $t\in(0,t_{*})$.
It follows that \begin{align*}
h'(t)=&4t^3-24t^2+14t+4,\\
h'''(t)=&24t-48<0.
\end{align*}
Observe that  $h'(0)>0$, $h'(1)<0$  and $h'(t)$ is strictly convex for $t\in(0,1)$. It implies that there exists a unique $t_0\in(0,1)$, such that $h'(t)>0$   for $t\in(0,t_0)$ and $h'(t)<0$   for $t\in(t_0, 1)$.  Because of $h(0)<0$ and $h(1)=0$,  there is a unique $t_*\in(0, t_0)$ such that $h(t_*)=0$. Hence, $h(t)<0$ holds for $t\in(0,t_{*})$.

The rest of the proof is easy. We omit it.
\end{proof}
\end{lemma}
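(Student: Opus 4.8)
The plan is to convert the claimed order relation between $r_{1,t}$ and $r_{2,t}$ into the sign of one polynomial in $t$ and then determine that sign. Since the two quantities carry different denominators ($4t$ and $4$), I would first form $r_{1,t}-r_{2,t}$, put it over the common denominator $4t$, and multiply through by $4t>0$, which preserves sign on $(0,1)$. The numerator is $t^2-3t-2+\sqrt{9t^2-4t+4}+t\sqrt{t^2-8t+8}$, so the comparison becomes equivalent to
\[
\sqrt{9t^2-4t+4}+t\sqrt{t^2-8t+8}<-t^2+3t+2,
\]
where the strict inequality corresponds to $r_{1,t}<r_{2,t}$. Before touching the radicals I would record that on $(0,1)$ each radicand $9t^2-4t+4$ and $t^2-8t+8$ (its roots $4\pm2\sqrt{2}$ lie outside $(0,1)$), as well as the right-hand side $-t^2+3t+2$ (its roots lie outside $[0,1]$), is strictly positive. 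This positivity is exactly what turns squaring into a genuine \emph{equivalence} rather than a one-way implication, which matters because the lemma asserts both a strict $<$ and a strict $>$.

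With positivity in hand I would clear the radicals by squaring twice. The first squaring leaves a single cross term $2t\sqrt{(9t^2-4t+4)(t^2-8t+8)}$ to be compared with $2t(t^2-6t+8)$; dividing by $2t$ and noting $t^2-6t+8=(t-2)(t-4)>0$ on $(0,1)$ keeps the right-hand side positive, so a second squaring removes the last radical. Every step is reversible because both sides stay nonnegative throughout, so after simplification the comparison $r_{1,t}<r_{2,t}$ is \emph{equivalent} to
\[
h(t):=t^4-8t^3+7t^2+4t-4<0 .
\]
The whole problem has thus been reduced to locating the sign changes of the quartic $h$ on $(0,1)$.

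For the sign analysis I would look for an obvious root, and indeed $h(1)=1-8+7+4-4=0$, so $(t-1)$ is a factor and $h(t)=(t-1)(t^3-7t^2+4)$. On $(0,1)$ the linear factor $t-1$ is negative, so the sign of $h$ is opposite to that of $g(t):=t^3-7t^2+4$. Since $g'(t)=3t^2-14t=t(3t-14)<0$ on $(0,1)$, the cubic $g$ strictly decreases from $g(0)=4>0$ to $g(1)=-2<0$, hence has a unique zero $t_*\in(0,1)$ with $g>0$ on $(0,t_*)$ and $g<0$ on $(t_*,1)$. Multiplying by the negative factor $t-1$ flips these signs, giving $h<0$ on $(0,t_*)$ and $h>0$ on $(t_*,1)$, with $t_*$ the unique root of $h$ in the open interval. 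Combined with the equivalence from the previous step, this yields precisely the asserted dichotomy $r_{1,t}<r_{2,t}$ on $(0,t_*)$ and $r_{1,t}>r_{2,t}$ on $(t_*,1)$.

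The double squaring is routine bookkeeping, so the genuine obstacle is twofold. First, I must maintain \emph{equivalence} at each squaring, which is why I front-load the verification that all radicands and both sides remain positive on $(0,1)$; skipping this would only give one implication and would not license the strict reverse inequality on $(t_*,1)$. Second, the sign analysis of the quartic is the crux: it becomes clean only once one spots the root at $t=1$ and factors. Without that observation one is forced into a derivative argument (using $h'''(t)=24t-48<0$ to get concavity of $h'$, hence unimodality of $h$) that reaches the same conclusion but is messier, and there the delicate point is ensuring the positive region of $h$ extends all the way up to, but not including, $t=1$.
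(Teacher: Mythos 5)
Your proposal is correct, and its second half takes a genuinely different route from the paper. The reduction to the sign of $h(t)=t^4-8t^3+7t^2+4t-4$ is the same in both arguments (identical chain of squarings), but you are more careful about it: you verify that both radicands and both sides stay positive on $(0,1)$ so that each squaring is an equivalence, which lets you read off the strict reverse inequality on $(t_*,1)$ for free. The paper instead writes the implication in the direction ``if $r_{1,t}<r_{2,t}$ then $h(t)<0$'' (which is not the direction actually needed), relies implicitly on reversibility, and dismisses the $(t_*,1)$ case with ``the rest of the proof is easy.'' Where you really diverge is the sign analysis of $h$: you spot the root $t=1$ and factor $h(t)=(t-1)(t^3-7t^2+4)$, after which a single monotonicity check on the cubic $g(t)=t^3-7t^2+4$ (namely $g'(t)=t(3t-14)<0$ on $(0,1)$, $g(0)=4$, $g(1)=-2$) gives the unique sign change, and the negative factor $t-1$ flips it into exactly the claimed dichotomy. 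The paper instead runs a derivative cascade: $h'''<0$ makes $h'$ strictly concave (the paper miscalls this ``convex''), so $h'$ has one sign change, so $h$ is unimodal, and combined with $h(0)<0$, $h(1)=0$ one locates the unique interior zero $t_*$. Both are sound; your factorization is shorter, avoids the concave/convex slip, and identifies $t_*$ explicitly as the root of the cubic $t^3-7t^2+4$ in $(0,1)$, while the paper's unimodality argument is the kind that generalizes when no rational root is available.
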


\section{ Main results }
\begin{theorem}\rm
\label{theorem3.1} Suppose that $f \in \mathcal{A}$ with $|f(z)| < 1$. Let $a:=|a_{0}|$. Then for $(p,\lambda)\in\uppercase\expandafter{\romannumeral1}\cup\uppercase\expandafter{\romannumeral2}$, it holds that
\begin{equation}\label{3.1}
|f(z)|^{p}+\lambda\sum_{k=1}^{\infty}|a_{k}||z|^{k}\leq 1
\end{equation} for$|z|=r\leq R_{\lambda,p}$, where $$R_{\lambda,p}=\left\{ \begin{array}{*{20}{c}}
{\frac{-p-\lambda+\sqrt{\lambda^2+4p\lambda}}{2\lambda-p},} \quad for \quad p\neq2\lambda\\
{1/3,}  \quad for \quad p=2\lambda,
\end{array}
\right.$$
$I=\left\{(p,\lambda)|0<p\leq1, 0<\lambda\right\}$ and $II=\left\{(p,\lambda)|1<p\leq\frac{3}{2}, 0<\lambda\leq\frac{9p}{10}\right\}$.
Furthermore, the radius $R_{\lambda,p}$ is sharp.
\end{theorem}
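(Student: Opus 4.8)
The plan is to reduce the coefficient inequality to a one–variable estimate in $a=|a_0|$ and then analyze that estimate. First I would bound the two pieces separately. Applying the Schwarz--Pick lemma to $f$ at the points $0$ and $z$ gives $|f(z)|\le \frac{a+r}{1+ar}$, while Lemma \ref{lemma2.2} together with the geometric series gives $\lambda\sum_{k=1}^{\infty}|a_k|r^k\le \lambda(1-a^2)\frac{r}{1-r}$. Hence it suffices to prove
\[
G_r(a):=\left(\frac{a+r}{1+ar}\right)^{p}+\lambda(1-a^2)\frac{r}{1-r}\le 1\qquad(a\in[0,1]).
\]
Since $G_r(1)=1$, the whole problem is to show that $a=1$ maximizes $G_r$ on $[0,1]$, i.e. that $1-G_r(a)\ge 0$.

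The cleanest way I see to finish is the substitution $x=\frac{a+r}{1+ar}\in[r,1]$, under which $1-a^2=\frac{(1-r^2)(1-x^2)}{(1-rx)^2}$, so that
\[
G_r(a)=x^{p}+\frac{\lambda r(1+r)}{(1-rx)^2}\,(1-x^2)\le x^{p}+\frac{\lambda r(1+r)}{(1-r)^2}\,(1-x^2),
\]
the last step using $1-rx\ge 1-r$. The right-hand side is exactly $\Phi(x)=x^{p}+A(1-x^2)$ with $A=\frac{\lambda r(1+r)}{(1-r)^2}$, and a direct computation shows that $A\le p/2$ is equivalent to $Q(r):=p(1-r)^2-2\lambda r(1+r)\ge 0$, i.e. to $r\le R_{\lambda,p}$ (one checks $Q(r)=0$ reproduces both branches of $R_{\lambda,p}$, including $1/3$ when $p=2\lambda$). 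Then Lemma \ref{lemma2.3} gives $\Phi'>0$ on $[0,1)$, whence $\Phi(x)\le\Phi(1)=1$. An alternative route, which is the one matching the two regions $\uppercase\expandafter{\romannumeral1},\uppercase\expandafter{\romannumeral2}$, is to bound $1-x^{p}=1^{p}-x^{p}$ from below by Lemma \ref{lemma2.5}: for $0<p\le1$ one uses $1-x^p\ge p(1-x)$, and for $p>1$ one uses $1-x^p\ge p\,x^{p-1}(1-x)$; then using $1-x=\frac{(1-a)(1-r)}{1+ar}$ and factoring out $1-a$, one is left to check that a single bracket is nonnegative.

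I expect the genuine obstacle to be the case $p>1$ (region $\uppercase\expandafter{\romannumeral2}$). There the Lemma \ref{lemma2.5} bound leaves the requirement
\[
p\left(\frac{a+r}{1+ar}\right)^{p-1}\frac{1-r}{1+ar}\ \ge\ \lambda(1+a)\frac{r}{1-r}\qquad(a\in[0,1]).
\]
At the endpoint $a=1$ this is precisely $Q(r)\ge 0$, i.e. $r\le R_{\lambda,p}$; but because of the factor $\left(\frac{a+r}{1+ar}\right)^{p-1}$ the left-hand side is no longer monotone in $a$, so the minimum over $a\in[0,1]$ need not be attained at the endpoint. Showing that it is (or controlling the interior behaviour) is where one must spend the restrictions $p\le \tfrac32$ and $\lambda\le \tfrac{9p}{10}$, and the constant $9/10$ should emerge from this endpoint comparison. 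For $0<p\le1$ the analogous bracket reduces to $p(1-r)^2\ge \lambda(1+a)(1+ar)r$, whose right-hand side is increasing in $a$, so $a=1$ is automatically the worst case and no upper bound on $\lambda$ is needed, which explains the asymmetry between the two regions.

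Finally, for sharpness I would test the M\"obius functions $f(z)=\frac{a-z}{1-az}$ at $z=-r$, for which the left-hand side of (\ref{3.1}) equals $\Lambda(a):=\left(\frac{a+r}{1+ar}\right)^{p}+\lambda(1-a^2)\frac{r}{1-ar}$. One has $\Lambda(1)=1$ and $\Lambda'(1)=p\frac{1-r}{1+r}-\frac{2\lambda r}{1-r}$, whose sign is that of $Q(r)$. Hence for any $r>R_{\lambda,p}$ we have $\Lambda'(1)<0$, so $\Lambda(a)>1$ for $a$ slightly below $1$ and the inequality fails; this forces the radius to be exactly $R_{\lambda,p}$. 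Since equality is only approached as $a\to1$, this matches the paper's remark that the extremal function is not attained while the bound is nonetheless sharp.
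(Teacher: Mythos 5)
Your argument is correct, and your primary route is genuinely different from the paper's. The paper, after the same Schwarz--Pick and Lemma \ref{lemma2.2} reductions, applies Lemma \ref{lemma2.5} to $(a+r)^p-(1+ar)^p$ and then studies the resulting function of $a$ directly: in region $I$ it is monotone in $a$ and the endpoint $a=1$ yields the condition $h(r)=2\lambda r(1+r)-p(1-r)^2\le 0$; in region $II$ the relevant factor $\varphi(a,r)$ is only convex in $a$, so both endpoints must be checked, and the $a=0$ endpoint is where the hypotheses $p\le\frac32$ and $\lambda\le\frac{9p}{10}$ are spent (via the bound $R_{\lambda,p}\ge\frac14$) --- exactly the obstacle you predicted for your ``alternative route,'' which you left uncompleted. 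That does not matter, because your substitution $x=\frac{a+r}{1+ar}$, together with $1-a^2=\frac{(1-r^2)(1-x^2)}{(1-rx)^2}$ and $1-rx\ge 1-r$, reduces the whole problem to the paper's own Lemma \ref{lemma2.3} with $A=\frac{\lambda r(1+r)}{(1-r)^2}$, and $A\le p/2$ is precisely $r\le R_{\lambda,p}$. This is shorter, avoids the case split entirely, and in fact proves the inequality for all $0<p\le 2$ and all $\lambda>0$, a strictly larger parameter region than $I\cup II$; it is essentially the mechanism the paper itself uses for Theorem \ref{theorem3.2}, transplanted here. Your sharpness argument is also cleaner: computing $\Lambda'(1)=p\frac{1-r}{1+r}-\frac{2\lambda r}{1-r}$ and noting that its sign is that of $p(1-r)^2-2\lambda r(1+r)$ replaces the paper's $\varepsilon$--$\delta$ limiting argument and gives the same conclusion. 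The only small points to make explicit are that $h$ is strictly increasing on $(0,1)$ (so $h(r)>0$ for every $r>R_{\lambda,p}$, not merely near the root) and that $x$ ranges over $[r,1)$ so Lemma \ref{lemma2.3} indeed applies; neither is a gap.
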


\begin{proof}
According to the assumption and the Schwarz-Pick lemma, we have
\begin{equation}\label{3.2}
|f(z)|\leq\frac{|z|+a}{1+a|z|} \quad for \quad z\in\mathbb{D}.
\end{equation}
Using inequality (\ref{3.2}) and Lemma \ref{lemma2.2}, we have \begin{align*}
|f(z)|^{p}+\lambda\sum_{k=1}^{\infty}|a_{k}||z|^{k}\leq&
(\frac{r+a}{1+ar})^{p}+\lambda(1-a^2)\frac{r}{1-r}:=A(a,r).
\end{align*}
Now, we need to show that $A(a,r)\leq1$ holds for $r\leq R_{\lambda,p}$. It is equivalent to show $B(a,r)\leq0$, where
\begin{align}
B(a,r)=&(a+r)^{p}(1-r)+\lambda(1-a^2)r(1+ar)^{p}-(1+ar)^{p}(1-r)\nonumber\\
\label{3.3}=&(1-r)[(a+r)^{p}-(1+ar)^{p}]+\lambda(1-a^2)r(1+ar)^{p}.
\end{align}
Next, we divide it into two cases to discuss.

{\bf Case 1} $(p,\lambda)\in\uppercase\expandafter{\romannumeral1}$, where $I=\left\{(p,\lambda)|0<p\leq1, 0<\lambda\right\}$.
According to Lemma \ref{lemma2.5}, we have
\begin{align*}
B(a,r)
\leq&(-p)(1+ar)^{p-1}(1-a)(1-r)^2+\lambda(1-a^2)r(1+ar)^{p}\\
:=&(1-a)(1+ar)^{p-1}g(a),
\end{align*}
where $g(a)=\lambda(1+a)r(1+ar)-p(1-r)^2$.
Then we have $g(a)\leq g(1)$ holds for $a<1$.
Let $$h(r)=2r\lambda(1+r)-p(1-r)^2.$$
It follows that $g(1)=h(r)$.
It is enough to show that $h(r)\leq 0$ holds for $r\leq R_{\lambda,p}$.
Direct computations lead to\begin{align*}
h'(r)=&2\lambda(1+r)+2r\lambda+2p(1-r)>0,
\end{align*}
$h(0)=-p<0$ and $h(1)=4\lambda>0$. Then we have $h(\frac{-p-\lambda+\sqrt{\lambda^2+4p\lambda}}{2\lambda-p})=0$ for $p\neq2\lambda$ and $h(1/3)=0$ for $p=2\lambda$. Thus there is a unique number $R_{\lambda,p}$ such that $h(R_{\lambda,p})=0$. Therefore, $h(r)\leq0$ holds for $r\leq R_{\lambda,p}$.

{\bf Case 2}   $(p,\lambda)\in\uppercase\expandafter{\romannumeral2}$, where $II=\left\{(p,\lambda)|1<p\leq\frac{3}{2}, 0<\lambda\leq\frac{9p}{10}\right\}$.
According to Lemma \ref{lemma2.5}, equation (\ref{3.3}) leads to
\begin{align*}
B(a,r)
\leq&(-p)(a+r)^{p-1}(1-a)(1-r)^2+\lambda(1-a^2)r(1+ar)^{p}\\
:=&(1-a)\varphi(a,r),
\end{align*}
where $$\varphi(a,r)=\lambda(1+a)r(1+ar)^{p}-p(a+r)^{p-1}(1-r)^2.$$
It is enough to show that $\varphi(a,r)\leq 0$ holds for $r\leq R_{\lambda,p}$.
Easy computations yield that \begin{align*}
\frac{\partial^{2} \varphi(a,r)}{\partial a^{2}}&=2\lambda(1+ar)^{p-1}pr^2+\lambda(1+ar)^{p-2}(1+a)p(p-1)r^3\\
&+p(p-1)(2-p)(1-r)^2(a+r)^{p-3}>0.
\end{align*}
It means that for fixed $r$,  the function $\varphi(a,r)$ is strictly convex for $a\in[0,1)$. It follows that
\begin{align*}
\varphi(1,r)&=(1+r)^{p-1}[2\lambda r(1+r)-p(1-r)^2]=(1+r)^{p-1}h(r),\\
\varphi(0,r)&=r[\lambda-pr^{p-2}(1-r)^2]:=r\psi(r).
\end{align*}
So, we just  need to prove $h(r)\leq0$ and $\psi(r)\leq0$ for $r\leq R_{\lambda,p}$.

It is
obvious that $h(r)\leq0$ holds for $r\leq R_{\lambda,p}$. Next, we focus on proving the inequality $\psi(r)\leq0$ for $r\leq R_{\lambda,p}$.

Observe that  $\psi(r)$ is an increasing function for $r\in (0,1)$.  Thus, we just need to show that $\psi(R_{\lambda,p})\leq0$.
Since  $h(R_{\lambda,p})=0$, we have $p(1-R_{\lambda,p})^2=2\lambda R_{\lambda,p}(1+R_{\lambda,p})$.  Replacing $p(1-R_{\lambda,p})^2$ with $2\lambda R_{\lambda,p}(1+R_{\lambda,p})$ in $\psi(R_{\lambda,p})$,
We have $$\psi(R_{\lambda,p})=\lambda-pR_{\lambda,p}^{p-2}(1-R_{\lambda,p})^2=\lambda[1-2R_{\lambda,p}^{p-1}(1+R_{\lambda,p})].$$
Simple computation leads to that $R_{\lambda,p}^{p-1}(1+R_{\lambda,p})\geq2R_{\lambda,p}^{p-\frac{1}{2}}\geq2R_{\lambda,p}$ for $1<p\leq\frac{3}{2}$.
Thus, $$\psi(R_{\lambda,p})\leq\lambda(1-4R_{\lambda,p}).$$
Now, it is sufficient for us to show that $1-4R_{\lambda,p}\leq0$, that is $R_{\lambda,p}\geq\frac{1}{4}$.
We divide it into three cases to discuss.

(i) If $0<\lambda<\frac{p}{2}$, then we have $20\lambda^2-28\lambda p+9p^{2}>0$. Easy computations yield that $16(\lambda^{2}+4\lambda p)<(6\lambda+3p)^{2}$ and $4\sqrt{\lambda^{2}+4\lambda p}<6\lambda+3p$, it follows that
$\frac{-p-\lambda+\sqrt{\lambda^2+4p\lambda}}{2\lambda-p}>\frac{1}{4}$. Thus, $R_{\lambda,p}>\frac{1}{4}$.

(ii) If $\lambda=\frac{p}{2}$, then we have $R_{\lambda,p}=\frac{1}{3}>\frac{1}{4}$.

(iii) If $\frac{p}{2}<\lambda\leq\frac{9p}{10}$, it is similar to that of (i), we have $R_{\lambda,p}\geq\frac{1}{4}$.

Next, we show the radius $R_{\lambda,p}$ is sharp.\\
For $a\in[0,1)$, let \begin{equation}\label{3.4}
\quad f(z)=\frac{a-z}{1-az}=a-(1-a^2)\sum_{k=1}^{\infty}a^{k-1}z^k, \quad z\in \mathbb{D}.
\end{equation}
Taking $z=-r$, then the left side of inequality (\ref{3.1}) reduces to\begin{equation}\label{3.5}
|f(-r)|^{p}+\lambda\sum_{k=1}^{\infty}|a_{k}|r^{k}
=(\frac{r+a}{1+ar})^{p}+\lambda(1-a^2)\frac{r}{1-ar}.
\end{equation}
Now we just need to show that if $r>R_{\lambda,p}$, then there exists an $a\in[0,1)$, such that the right side of (\ref{3.5}) is greater than $1$.
That is $C(a,r)>0$, where \begin{align}
C(a,r)&=(r+a)^{p}(1-ar)+\lambda(1-a^2)r(1+ar)^{p}-(1+ar)^{p}(1-ar)\nonumber\\
\label{3.6}&=(1-ar)[(a+r)^{p}-(1+ar)^{p}]+\lambda(1-a^2)r(1+ar)^{p}.
\end{align}
Now we divide it into two subcases to discuss.

{\bf Subcase 1} $(p,\lambda)\in\uppercase\expandafter{\romannumeral1}$, where $I=\left\{(p,\lambda)|0<p\leq1, 0<\lambda\right\}$.
According to Lemma \ref{lemma2.5}, we have\begin{align*}
C(a,r)
\leq&(-p)(1-ar)(1+ar)^{p-1}(1-a)(1-r)+\lambda(1-a^2)r(1+ar)^{p}\\
=&(1-a)(1+ar)^{p-1}[\lambda(1+a)r(1+ar)-p(1-ar)(1-r)]\\
\leq&(1-a)(1+ar)^{p-1}[2\lambda r(1+r)-p(1-r)^2]\\
=&(1-a)(1+ar)^{p-1}h(r).
\end{align*}
We already know that $h(r)$ is a continuous and increasing function of $r\in[0,1)$, the monotonicity of $h(r)$ leads  to that if $r>R_{\lambda,p}$, then $h(r)>0$.
By the continuity of $C(a,r)$, we claim that $$\lim_{a\rightarrow1^{-}}[C(a,r)-(1-a)(1+ar)^{p-1}h(r)]=0.$$
That is, if $r>R_{\lambda,p}$, for $\forall \, \varepsilon>0$, there exists a $\delta>0$,  when $a\in(1-\delta,1)$, we have $|C(a,r)-(1-a)(1+ar)^{p-1}h(r)|<\varepsilon$.
It follows that $C(a,r)>(1-a)(1+ar)^{p-1}h(r)-\varepsilon$. Now, taking $\varepsilon=(1-a)(1+a)^{p-1}h(r)/2$, then we have $C(a,r)>(1-a)(1+a)^{p-1}h(r)/2>0$ for $r>R_{\lambda,p}$.

{\bf Subcase 2} $(p,\lambda)\in\uppercase\expandafter{\romannumeral2}$, where $II=\left\{(p,\lambda)|1<p\leq\frac{3}{2}, 0<\lambda\leq\frac{9p}{10}\right\}$.
According to  Lemma \ref{lemma2.5} and equation(\ref{3.6}), we have
\begin{align*}
C(a,r)\leq&(-p)(1-ar)(a+r)^{p-1}(1-a)(1-r)+\lambda(1-a^2)r(1+ar)^{p}\\
=&(1-a)\mu(a,r),
\end{align*}
where $$\mu(a,r)=(1+ar)^{p}\lambda(1+a)r-p(1-ar)(1-r)(a+r)^{p-1}.$$
Observe that \begin{align*}
\mu(1,r)=(1+r)^{p-1}[2\lambda r(1+r)-p(1-r)^2]=(1+r)^{p-1}h(r).
\end{align*}
The monotonicity of $h(r)$ implies that if $r>R_{\lambda,p}$, we have $h(r)>0$.
Then, by the continuity of $C(a,r)$ and $\mu(a,r)$, we claim that $$\lim_{a\rightarrow1^{-}}[C(a,r)-(1-a)\mu(a,r)]=0.$$
That is, if $r>R_{\lambda,p}$, for $\forall \, \varepsilon>0$, there exists a $\delta>0$,  when $a\in(1-\delta,1)$, we have $|C(a,r)-(1-a)\mu(a,r)|<\varepsilon$.
It follows that $C(a,r)>(1-a)\mu(a,r)-\varepsilon$. Now, taking $\varepsilon=(1-a)\mu(a,r)/2$, then we have $C(a,r)>(1-a)\mu(a,r)/2>0$ for $r>R_{\lambda,p}$.
\end{proof}

\begin{remark}\rm  (1)
If $p=1$, then Theorem \ref{theorem3.1} reduces to Theorem \ref{theorem1.3}.\\
(2) If $p=1$ and $\lambda=1$, then Theorem \ref{theorem3.1} reduces to Theorem \ref{theorem1.1}.
\end{remark}

\begin{theorem}\rm
\label{theorem3.2} Suppose that $f \in \mathcal{A}$ and $|f(z)| < 1$. Let $a:=|a_{0}|$. Then for $\lambda\in (0,\infty)$ and $p\in (0,1]$, we have
\begin{equation}\label{3.7}
|f(z)|^{p}+\lambda\sum_{k=1}^{\infty}|\frac{{f^{(k)}(z)}}{k!}||z|^{k}\leq 1
\end{equation}
for $|z|=r\leq r_{\lambda, p}<\frac{1}{2}$,
where $r_{\lambda, p}=\frac{-2\lambda-p+\sqrt{9p^2+4\lambda p+4\lambda^2}}{4p}$.
Furthermore, the radius $r_{\lambda,p}$ is the best possible.
\end{theorem}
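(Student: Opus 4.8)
The plan is to prove the bound and its sharpness separately, following the template of Theorem \ref{theorem3.1} but with the coefficient estimate (Lemma \ref{lemma2.2}) replaced by the derivative estimate (Lemma \ref{lemma2.4}). First I would apply Lemma \ref{lemma2.4} termwise: since $|f(z)|<1$,
$$\left|\frac{f^{(k)}(z)}{k!}\right|\le \frac{(1-|f(z)|^2)(1+r)^{k-1}}{(1-r^2)^k}=\frac{1-|f(z)|^2}{(1-r)^k(1+r)}.$$
Multiplying by $r^k$ and summing the resulting geometric series — which converges precisely because we restrict to $r<1/2$, and which is the structural reason the threshold cannot exceed $1/2$ — gives
$$\lambda\sum_{k=1}^{\infty}\left|\frac{f^{(k)}(z)}{k!}\right|r^k\le \frac{\lambda r}{(1+r)(1-2r)}\,(1-|f(z)|^2).$$
Hence the left-hand side of (\ref{3.7}) is at most $\Phi(|f(z)|)$, where $\Phi(x)=x^p+A(1-x^2)$ with $A=\frac{\lambda r}{(1+r)(1-2r)}$. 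Note that only $|f(z)|\le 1$ is used here; the parameter $a=|a_0|$ enters solely in the sharpness argument.

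The key observation for the bound is that $\Phi(1)=1$, so once $\Phi$ is increasing on $[0,1)$ we get $\Phi(|f(z)|)\le\Phi(1)=1$ with no further work. Lemma \ref{lemma2.3} provides exactly this, under $0<p\le 2$ (which holds since $p\in(0,1]$) and $0<A\le p/2$. It therefore remains to identify $A\le p/2$ with $r\le r_{\lambda,p}$: clearing denominators, $A\le p/2$ reads $2\lambda r\le p(1+r)(1-2r)$, i.e. $2pr^2+(p+2\lambda)r-p\le 0$, whose unique positive root is $r_{\lambda,p}=\frac{-2\lambda-p+\sqrt{9p^2+4\lambda p+4\lambda^2}}{4p}$; one checks $r_{\lambda,p}<1/2$ by squaring. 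This closes the bound.

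For sharpness I would take the automorphism $f(z)=\frac{a-z}{1-az}$, $a\in[0,1)$, for which $\frac{f^{(k)}(z)}{k!}=-\frac{(1-a^2)a^{k-1}}{(1-az)^{k+1}}$. The crucial choice is to evaluate on the positive real axis at $z=r$, so that the denominators are $1-ar$ rather than $1+ar$; summing the geometric series then produces the factor $1-2ar$ and the exact value
$$|f(r)|^{p}+\lambda\sum_{k=1}^{\infty}\left|\frac{f^{(k)}(r)}{k!}\right|r^k=\left(\frac{a-r}{1-ar}\right)^{p}+\frac{\lambda(1-a^2)r}{(1-ar)(1-2ar)}=:\tilde G(a,r).$$
Since $\tilde G(1,r)=1$, the idea is to differentiate in $a$ at $a=1^-$: a direct computation gives $\frac{\partial\tilde G}{\partial a}(1,r)=\frac{p(1+r)(1-2r)-2\lambda r}{(1-r)(1-2r)}$, whose numerator is negative exactly when $r>r_{\lambda,p}$. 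Thus for $r_{\lambda,p}<r<1/2$ one has $\tilde G(a,r)>1$ for $a$ slightly below $1$, so (\ref{3.7}) fails; for $r\ge 1/2$ one instead chooses $a$ with $ar\ge 1/2$, making the series diverge. I expect this sharpness step to be the main obstacle: unlike in Theorem \ref{theorem3.1}, Lemma \ref{lemma2.4} is tight only in the $k=1$ term for a single automorphism (the $k$-th ratio equals $(\tfrac{a(1-r)}{1-ar})^{k-1}$), so the intermediate bound $\Phi$ is never attained. One must therefore argue directly with the exact value $\tilde G$ and the correct evaluation point in order to recover precisely the threshold $r_{\lambda,p}$.
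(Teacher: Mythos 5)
Your proof is correct, and while it shares the paper's skeleton --- Lemma \ref{lemma2.4} plus the geometric series reducing the left side of (\ref{3.7}) to $\Phi(|f(z)|)$ with $A=\frac{\lambda r}{(1+r)(1-2r)}$, Lemma \ref{lemma2.3} for monotonicity, the same quadratic $\beta(r)=2pr^2+(p+2\lambda)r-p$, and the same extremal function evaluated at $z=r$ --- it departs from the paper at two points, both to your advantage. For the upper bound, the paper does not stop at $\Phi(|f(z)|)\le\Phi(1)=1$: it inserts the Schwarz--Pick estimate $|f(z)|\le\frac{r+a}{1+ar}$ and then spends most of the proof showing $G(a,r)\le 0$ via Lemma \ref{lemma2.5} and a case split on the sign of $\alpha'(a)$. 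Your observation that monotonicity of $\Phi$ on $[0,1)$ together with $\Phi(1)=1$ already finishes the bound is valid and makes that entire block redundant (tracking $a$ is genuinely needed in Theorem \ref{theorem3.1}, where the coefficient bound $1-a^2$ enters, but not here). For sharpness, the paper bounds $Q(a,r)$ from \emph{above} by $(1-a)(1-ar)^{p-1}\beta(r)$ and then argues through a limit statement in which $\varepsilon$ is ultimately chosen to depend on $a$; your direct computation $\partial_a\tilde G(1,r)=\frac{p(1+r)(1-2r)-2\lambda r}{(1-r)(1-2r)}$, whose numerator equals $-\beta(r)$, is the clean and rigorous version of what that argument is reaching for, and your side remark that Lemma \ref{lemma2.4} is tight only in the $k=1$ term (ratio $\left(\frac{a(1-r)}{1-ar}\right)^{k-1}$), forcing one to work with the exact value $\tilde G$ at $z=r$, correctly identifies the delicate point. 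Your separate treatment of $r\ge 1/2$ via divergence of the series when $ar\ge 1/2$ covers a case the paper leaves implicit.
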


\begin{proof}
According to the assumption and Lemma \ref{lemma2.4}, we have
\begin{align*}
|f(z)|^{p}+\lambda\sum_{k=1}^{\infty}|\frac{{f^{(k)}(z)}}{k!}||z|^{k}
\leq&|f(z)|^{p}+\lambda(1-|f(z)|^2)\sum_{k=1}^{\infty}\frac{r^{k}}{(1-r)^{k}(1+r)}\\
=&|f(z)|^{p}+\frac{\lambda r}{(1+r)(1-2r)}(1-|f(z)|^{2}).
\end{align*}

Next, in order to use  Lemma \ref{lemma2.3}, we need to verify the condition  $\frac{\lambda r}{(1+r)(1-2r)}\leq\frac{p}{2}$. It is equivalent to show that $\beta(r)\leq0$, where $$\beta(r)=2pr^2+(2\lambda+p)r-p.$$
Observe that\begin{align*}
\beta'(r)=&4pr+2\lambda+p>0,
\end{align*}
$\beta(0)=-p<0$ and $\beta(\frac{1}{2})=\lambda>0$. There is a unique number $r_{\lambda, p}=\frac{-2\lambda-p+\sqrt{9p^2+4\lambda p+4\lambda^2}}{4p}\in (0,\frac{1}{2})$ such that $\beta(r_{\lambda, p})=0$. Hence, $\beta(r)\leq0$ holds for $r\leq r_{\lambda, p}$.

Now Lemma \ref{lemma2.3} implies that $|f(z)|^{p}+\frac{r\lambda}{(1+r)(1-2r)}(1-|f(z)|^{2})$ is a monotonically increasing function of $|f(z)|$.  Considering inequality (\ref{3.2}), we have
\begin{align*}
|f(z)|^{p}+\frac{\lambda r}{(1+r)(1-2r)}(1-|f(z)|^{2})
\leq&(\frac{r+a}{1+ar})^{p}+\frac{\lambda r}{(1+r)(1-2r)}[1-(\frac{r+a}{1+ar})^{2}]:=F(a,r).
\end{align*}
It is sufficient for us to show that $F(a,r)\leq1$ holds for $r\leq r_{\lambda, p}$.  That is $G(a,r)\leq0$ for $r\leq r_{\lambda, p}$, where
\begin{align*}
G(a,r)
=&(a+r)^{p}(1-2r)+\lambda(1-a^2)r(1-r)(1+ar)^{p-2}-(1+ar)^{p}(1-2r)\nonumber\\
=&(1-2r)[(a+r)^{p}-(1+ar)^{p}]+\lambda(1-a^2)r(1-r)(1+ar)^{p-2}.
\end{align*}
Using Lemma \ref{lemma2.5}, we have \begin{align*}
G(a,r)
\leq&(-p)(1-2r)(1+ar)^{p-1}(1-a)(1-r)+\lambda(1-a^2)r(1-r)(1+ar)^{p-2}\\
=&(1-a)(1+ar)^{p-2}(1-r)\alpha(a),
\end{align*}
where $$\alpha(a)=\lambda(1+a)r-p(1-2r)(1+ar).$$
It is enough to show that $\alpha(a)\leq 0$ holds for $r\leq r_{\lambda, p}$.
Observe that\begin{align}\label{3.8}
\alpha'(a)=&2pr^2+r(\lambda-p).
\end{align}

If $\lambda \in (0,p)$, equation (\ref{3.8}) implies that $\alpha(a)$ is  decreasing for $r\in(0,\frac{p-\lambda}{2p})$ and  increasing  for $r\in[\frac{p-\lambda}{2p},\frac{1}{2})$. It follows that $\alpha(a)\leq \alpha(0)=(\lambda+2p)r-p<(\lambda+2p)\frac{p-\lambda}{2p}-p<0$ for $r\in(0,\frac{p-\lambda}{2p})$ and $\alpha(a)\leq \alpha(1)=\beta(r)$ for $r\in[\frac{p-\lambda}{2p},r_{\lambda, p})$. Because   $\beta(r)\leq0$ holds for $r\leq r_{\lambda, p}$, we have $\alpha(a)\leq 0$ holds for $r\leq r_{\lambda, p}$.

If $\lambda\in[p,\infty)$, by equation (\ref{3.8}), we have $\alpha'(a)>0$. It follows that $\alpha(a)\leq \alpha(1)=2pr^2+(2\lambda+p)r-p=\beta(r)$. Since $\beta(r)\leq0$ holds for $r\leq r_{\lambda, p}$, we have $\alpha(a)\leq 0$ holds for $r\leq r_{\lambda, p}$.

Next, we show the radius $r_{\lambda, p}$ is the best possible.  Considering the function (\ref{3.4}) and with $z=r$, then the left side of inequality (\ref{3.7}) reduces to
\begin{align}
|f(r)|^{p}+\lambda\sum_{k=1}^{\infty}|\frac{{f^{(k)}(r)}}{k!}|r^{k}\nonumber
&=(\frac{a-r}{1-ar})^{p}+\lambda\sum_{k=1}^{\infty}\frac{a^{k-1}(1-a^2)r^k}{(1-ar)^{k+1}}\nonumber\\
&=(\frac{a-r}{1-ar})^{p}+\lambda(1-a^2)\frac{r}{(1-ar)(1-2ar)}.\label{3.9}
\end{align}
Now we just need to show that if $r >r_{\lambda, p}$, then there exists an $a \in [0,1)$, such that (\ref{3.9}) is greater than 1. That is $Q(a,r)>0$ for $r > r_{\lambda, p}$, where \begin{align*}
Q(a,r)=&(a-r)^{p}(1-2ar)+\lambda(1-a^2)r(1-ar)^{p-1}-(1-ar)^{p}(1-2ar)\\
=&(1-2ar)[(a-r)^{p}-(1-ar)^{p}]+\lambda(1-a^2)r(1-ar)^{p-1}.\end{align*}
By Lemma \ref{lemma2.5}, we have\begin{align*}
Q(a,r)
\leq&(-p)(1-2ar)(1-ar)^{p-1}(1-a)(1+r)+\lambda(1-a^2)r(1-ar)^{p-1}\\
=&(1-a)(1-ar)^{p-1}[\lambda(1+a)r-p(1-2ar)(1+r)]\\
\leq&(1-a)(1-ar)^{p-1}[2\lambda r-p(1-2r)(1+r)]\\
=&(1-a)(1-ar)^{p-1}\beta(r).
\end{align*}

We already know that $\beta(r)$ is a continuous and increasing function of $r\in[0,\frac{1}{2})$. The monotonicity of $\beta(r)$ leads to that if $r>r_{\lambda,p}$, then $\beta(r)>0$.
By the continuity of $Q(a,r)$, we have that $$\lim_{a\rightarrow1^{-}}[Q(a,r)-(1-a)(1-ar)^{p-1}\beta(r)]=0.$$
That is, if $r>r_{\lambda,p}$, for $\forall \,\varepsilon >0$, there exists a $\delta>0$, when $a\in(1-\delta,1)$, we have $|Q(a,r)-(1-a)(1-ar)^{p-1}\beta(r)|<\varepsilon$.
It follows that $Q(a,r)>(1-a)(1-ar)^{p-1}\beta(r)-\varepsilon$. Now, taking $\varepsilon=\frac{1-a}{2}(1-ar_{\lambda,p})^{p-1}\beta(r)$,  we have $Q(a,r)>\frac{1-a}{2}(1-ar_{\lambda,p})^{p-1}\beta(r)>0$ for $r>r_{\lambda,p}$.
\end{proof}
\begin{remark}\rm
If $p=1$ and $\lambda =1$, then Theorem \ref{theorem3.2} reduces to Theorem \ref{theorem1.2}.
\end{remark}

\begin{theorem}\rm
\label{theorem3.3} Suppose that $f \in \mathcal{A}$ and $|f(z)| < 1$. Let $a:=|a_{0}|$. Then for $t \in (0,1)$ and $p\in (0,1]$, we have
\begin{equation}\label{3.10}
t|f(z)|^{p}+(1-t)\sum_{k=0}^{\infty}|a_{k}||z|^{k}\leq 1
\end{equation}
for $|z|=r \leq R_{t,p}$, where $$R_{t,p}=\left\{ \begin{array}{*{20}{c}}
{\frac{1-t+tp-2\sqrt{(1-t)(tp+1-t)}}{tp-3+3t},} \quad for \quad t\neq\frac{3}{p+3}\\
{1/2,}  \quad for \quad t=\frac{3}{p+3}.
\end{array}
\right.$$
Furthermore, the radius $R_{t,p}$ is the best possible.
\end{theorem}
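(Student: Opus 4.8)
The plan is to follow the template used for Theorems~\ref{theorem3.1} and \ref{theorem3.2}: estimate the left side of \eqref{3.10} by Schwarz--Pick together with the coefficient bound, clear denominators to obtain a one-variable sign condition, and identify $R_{t,p}$ as the relevant root of an explicit quadratic. By the Schwarz--Pick lemma $|f(z)|\le\frac{r+a}{1+ar}$, and since $0<p\le1$ the map $x\mapsto x^{p}$ is increasing, so $|f(z)|^{p}\le\left(\frac{r+a}{1+ar}\right)^{p}$; by Lemma~\ref{lemma2.2}, $|a_k|\le 1-a^2$ for $k\ge1$, whence $\sum_{k=0}^{\infty}|a_k|r^k\le a+(1-a^2)\frac{r}{1-r}$. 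It therefore suffices to prove $A(a,r)\le1$ for $r\le R_{t,p}$, where $A(a,r)=t\left(\frac{r+a}{1+ar}\right)^{p}+(1-t)a+(1-t)(1-a^2)\frac{r}{1-r}$. Clearing the positive factor $(1+ar)^{p}(1-r)$, this is equivalent to $B(a,r)\le0$, and a short regrouping gives
$$B(a,r)=t(1-r)\big[(a+r)^{p}-(1+ar)^{p}\big]+(1-t)(1-a)(1+ar)^{p}(2r+ar-1).$$

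Next I would apply Lemma~\ref{lemma2.5} with $\alpha=p\in(0,1]$ to $x=1+ar\ge y=a+r$, obtaining $(a+r)^{p}-(1+ar)^{p}\le-p(1+ar)^{p-1}(1-a)(1-r)$. Substituting and factoring out the nonnegative quantity $(1-a)(1+ar)^{p-1}$ reduces matters to showing $g(a)\le0$ on $[0,1)$, where $g(a)=-tp(1-r)^{2}+(1-t)(1+ar)(2r+ar-1)$. Expanding, $g$ is a quadratic in $a$ with positive leading coefficient $(1-t)r^{2}$ and vertex at $a=-1$, hence strictly increasing on $[0,1)$, so $g(a)\le g(1)=h(r)$ with
$$h(r)=-tp(1-r)^{2}+(1-t)(3r^{2}+2r-1)=(3-3t-tp)r^{2}+2(1-t+tp)r-(1-t+tp).$$
Writing $C=1-t+tp>0$ and $D=3-3t-tp$, one has $C+D=4(1-t)>0$, so the unique positive root of $h$ is $\frac{-C+\sqrt{C(C+D)}}{D}=\frac{1-t+tp-2\sqrt{(1-t)(1-t+tp)}}{tp-3+3t}=R_{t,p}$, the degenerate case $D=0$ (that is $t=\frac{3}{p+3}$) giving $h(r)=C(2r-1)$ and $R_{t,p}=1/2$. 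Since $h'(r)=2Dr+2C$ is affine with $h'(0)=2C>0$ and $h'(1)=2(C+D)=8(1-t)>0$, the function $h$ is strictly increasing on $[0,1]$ with $h(0)=-C<0<4(1-t)=h(1)$; hence $R_{t,p}$ is its only root in $(0,1)$ and $h(r)\le0$ exactly when $r\le R_{t,p}$, which completes the direct inequality.

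For sharpness I would take $f(z)=\frac{a-z}{1-az}$ and evaluate at $z=-r$, the point maximizing $|f|$ on $|z|=r$; there $|f(-r)|=\frac{a+r}{1+ar}$, $a_0=a$, and $|a_k|=(1-a^2)a^{k-1}$, so $\sum_{k=0}^{\infty}|a_k|r^k=a+(1-a^2)\frac{r}{1-ar}$. After clearing denominators, showing the left side of \eqref{3.10} exceeds $1$ for $r>R_{t,p}$ amounts to producing $a\in[0,1)$ with $\widetilde C(a,r)>0$, where $\widetilde C(a,r)=t(1-ar)\big[(a+r)^{p}-(1+ar)^{p}\big]+(1-t)(1-a)(1+ar)^{p}(r+2ar-1)$. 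Since $\widetilde C(1,r)=0$, I would compute the one-sided limit, using $\lim_{a\to1^{-}}\frac{(a+r)^{p}-(1+ar)^{p}}{1-a}=-p(1+r)^{p-1}(1-r)$, to find $\lim_{a\to1^{-}}\frac{\widetilde C(a,r)}{1-a}=(1+r)^{p-1}h(r)$, which is strictly positive for every $r\in(R_{t,p},1)$ by the monotonicity of $h$. Thus $\widetilde C(a,r)>0$ for $a$ close to $1$, so $R_{t,p}$ cannot be enlarged.

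The step I expect to be the main obstacle is the sign analysis of $h$: its leading coefficient $D=3-3t-tp$ changes sign across $t=\frac{3}{p+3}$, so a priori one must handle an upward parabola, a downward parabola, and a degenerate linear case, confirming in each that $R_{t,p}$ is the unique root in $(0,1)$ with the correct sign pattern. The unifying observation is that $h$ is in fact strictly increasing on $[0,1]$ in every case, because $h'$ is affine with $h'(0),h'(1)>0$; establishing this monotonicity and checking that the stated closed form for $R_{t,p}$ is genuinely that root is the crux, and it is exactly what makes both the direct bound and the sharpness limit go through.
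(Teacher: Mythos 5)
Your proposal is correct, but it reaches the key quadratic by a genuinely different route from the paper. The paper does not clear denominators at all: it works directly with $N(a)=t\bigl(\frac{r+a}{1+ar}\bigr)^{p}+(1-t)\bigl[a+(1-a^2)\frac{r}{1-r}\bigr]$, computes $N''(a)<0$ to conclude $N'(a)\geq N'(1)=\frac{Q(r)}{(1+r)(1-r)}$ with $Q(r)=tp(1-r)^2+(1-t)(1-3r)(1+r)$, and deduces that $N$ is increasing in $a$ (hence $N(a)\leq N(1)=1$) precisely when $Q(r)\geq 0$; your $h$ is exactly $-Q$, so the two analyses agree on the radius. Your version instead multiplies through by $(1+ar)^{p}(1-r)$, linearizes $(a+r)^{p}-(1+ar)^{p}$ via Lemma~\ref{lemma2.5}, and maximizes the resulting explicit quadratic $g(a)$ at $a=1$ --- the same template the paper uses for Theorems~\ref{theorem3.1} and \ref{theorem3.2}, which the paper happens to abandon for this theorem. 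What the paper's concavity argument buys is that it avoids the Bernoulli-type estimate entirely (no inequality is introduced before the final reduction); what your argument buys is uniformity with the other proofs and an elementary quadratic computation in place of the sign analysis of $N''$. Your sharpness argument, computing $\lim_{a\to1^{-}}\widetilde C(a,r)/(1-a)=(1+r)^{p-1}h(r)>0$, is essentially the paper's $\varepsilon$--$\delta$ argument on $F'(a)$ recast as a one-sided derivative, and is if anything cleaner. Two cosmetic points: your phrase ``the unique positive root of $h$'' is inaccurate when $D=3-3t-tp<0$ (both roots are then positive, with the larger one exceeding $1$), but your subsequent monotonicity argument on $[0,1]$ correctly isolates $R_{t,p}$ as the unique root in $(0,1)$; and your observation that $h'$ is affine with $h'(0),h'(1)>0$ handles the upward/downward/degenerate cases in one stroke, which is tidier than treating them separately.
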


\begin{proof}
Inequality (\ref{3.2}) and Lemma \ref{lemma2.2} lead to that
\begin{align*}
t|f(z)|^{p}+(1-t)\sum_{k=0}^{\infty}|a_{k}||z|^{k}
\leq&t(\frac{r+a}{1+ar})^{p}+(1-t)[a+(1-a^2)\frac{r}{1-r}]\\
:=&N(a).
\end{align*}
Now, it is enough to show that $N(a)\leq1$ holds for $r\leq R_{t,p}$.
It follows that
\begin{align*}
N'(a)=&tp\frac{(1-r^2)(r+a)^{p-1}}{(1+ar)^{p+1}}+(1-t)(1-\frac{2ar}{1-r}),\\
N''(a)=&tp\frac{(1-r^2)(r+a)^{p-2}}{(1+ar)^{p+2}}[p-1-2ar-(p+1)r^2]-\frac{2(1-t)r}{1-r}<0.
\end{align*}
It means that
\begin{align*}
N'(a)\geq N'(1)
=&\frac{tp(1-r)}{1+r}+\frac{(1-t)(1-3r)}{1-r}\\
:=&\frac{Q(r)}{(1+r)(1-r)},
\end{align*}
where $$Q(r)=tp(1-r)^2+(1-t)(1-3r)(1+r).$$Observe that $Q(0)=tp+1-t>0$, $Q(1)=-4(1-t)<0$, and
\begin{align*}
Q'(r)=-2tp(1-r)-2(1-t)(1+3r)\leq0.
\end{align*}
Then we have $Q(\frac{1-t+tp-2\sqrt{(1-t)(tp+1-t)}}{tp-3+3t})=0$ for $t\neq\frac{3}{p+3}$ and $h(1/2)=0$ for $t=\frac{3}{p+3}$. Thus there is a unique number $R_{t, p}$ such that $Q(r)\geq0$ for $r\leq R_{t, p}$. So, $N'(a)\geq N'(1)\geq0$ holds for $r\leq R_{t, p}$. Therefore, $N(a)\leq N(1)=1$ holds for $r\leq R_{t, p}$.

Next, we show the radius $R_{t, p}$ is the best possible.  Considering the function (\ref{3.4}) and with $z=-r$, then the left side of inequality (\ref{3.10}) reduces to
\begin{align*}
t|f(-r)|^{p}+(1-t)\sum_{k=0}^{\infty}|a_{k}|r^{k}&=t(\frac{r+a}{1+ar})^{p}+(1-t)[a+(1-a^2)\frac{r}{1-ar}]\\
&:=F(a).
\end{align*}
It is enough to show that $F(a)>1$ holds for $r> R_{t,p}$.
It follows that
\begin{align*}
F'(a)=&tp\frac{(1-r^2)(r+a)^{p-1}}{(1+ar)^{p+1}}+(1-t)[1+r\frac{a^{2}r-2a+r}{(1-ar)^2}],\\
F''(a)=&tp\frac{(1-r^2)(r+a)^{p-2}}{(1+ar)^{p+2}}[p-1-2ar-(p+1)r^2]-\frac{2(1-t)r(1-r^2)}{(1-ar)^3}<0.
\end{align*}
It means that
\begin{align*}
F'(a)\geq F'(1)
=&\frac{tp(1-r)}{1+r}+\frac{(1-t)(1-3r)}{1-r}\\
=&\frac{Q(r)}{(1+r)(1-r)}.
\end{align*}
We already know that $Q(r)$ is a continuous and decreasing function of $r\in[0,1)$, the monotonicity of $Q(r)$ leads  to that if $r>R_{t,p}$, then $Q(r)<0$.
By the continuity of $F'(a)$, we claim that $$\lim_{a\rightarrow1^{-}}[F'(a)-\frac{Q(r)}{(1+r)(1-r)}]=0.$$
That is, if $r>R_{t,p}$, for $\forall \, \varepsilon>0$, there exists a $\delta>0$,  when $a\in(1-\delta,1)$, we have $|F'(a)-\frac{Q(r)}{(1+r)(1-r)}|<\varepsilon$.
It follows that $F'(a)<\frac{Q(r)}{(1+r)(1-r)}+\varepsilon$. Now, taking $\varepsilon=\frac{-Q(r)}{2(1+r)(1-r)}$, then we have $F'(a)<\frac{Q(r)}{2(1+r)(1-r)}<0$ and $F(a)\geq F(1)=1$ for $r>R_{t,p}$.

\end{proof}
\begin{remark}\rm
If $t\rightarrow0^{+}$, then Theorem \ref{theorem3.3} reduces to the case of the classical Bohr-radius problem.
\end{remark}

\begin{theorem}\rm
\label{theorem3.4} {Suppose that $f \in \mathcal{A}$ and $|f(z)| < 1$. Let $a:=|a_{0}|$. Then for $t\in (0,1)$, we have
\begin{equation*}
t|f(z)|+(1-t)\sum_{k=1}^{\infty}|\frac{{f^{(k)}(z)}}{k!}|z|^{k}+(1-t)a\leq 1
\quad
for \quad r \leq {r_t}<\frac{1}{2},
\end{equation*}
where $$r_t=\left\{ \begin{array}{*{20}{c}}
{\frac{t-2+\sqrt{9t^2-4t+4}}{4t} },  \quad &t \in ( 0, t_{*})\\
{\frac{4-t-\sqrt{t^2-8t+8}}{4}},  \quad &t \in  (t_{*}, 1)
\end{array}
 \right.$$ and $t_{*}$ is the unique positive real root of the equation $$t^4-8t^3+7t^2+4t-4=0$$ for $t\in (0,1)$.
}
\end{theorem}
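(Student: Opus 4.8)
The plan is to follow the template of Theorems~\ref{theorem3.2} and~\ref{theorem3.3}: first linearize the derivative tail through Lemma~\ref{lemma2.4}, then reduce the whole expression to a function of the single modulus $a=|a_0|$ via the Schwarz--Pick estimate $(\ref{3.2})$, and finally read off the two separate constraints on $r$ that produce $r_{1,t}$ and $r_{2,t}$ respectively.

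First I would apply Lemma~\ref{lemma2.4} and sum the resulting geometric series exactly as in Theorem~\ref{theorem3.2}, giving
\[
\sum_{k=1}^{\infty}\Big|\tfrac{f^{(k)}(z)}{k!}\Big|r^{k}\le \frac{r\,(1-|f(z)|^{2})}{(1+r)(1-2r)},
\]
so the left-hand side is dominated by $t|f(z)|+\frac{(1-t)r}{(1+r)(1-2r)}\bigl(1-|f(z)|^{2}\bigr)+(1-t)a$. Writing this as $t\bigl[\,|f(z)|+A(1-|f(z)|^{2})\bigr]+(1-t)a$ with $A=\frac{(1-t)r}{t(1+r)(1-2r)}$, I would invoke Lemma~\ref{lemma2.3} (with $p=1$) to guarantee the bracket is increasing in $|f(z)|$; this is exactly the requirement $A\le\tfrac12$, i.e. $\frac{(1-t)r}{(1+r)(1-2r)}\le\tfrac{t}{2}$, and a short computation turns it into $2tr^{2}+(2-t)r-t\le0$, that is $r\le r_{1,t}$. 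On that range I substitute the maximal value $|f(z)|\le\frac{a+r}{1+ar}$ and use $1-\bigl(\frac{a+r}{1+ar}\bigr)^{2}=\frac{(1-a^{2})(1-r^{2})}{(1+ar)^{2}}$, reducing the claim to
\[
H(a,r):=t\frac{a+r}{1+ar}+(1-t)\frac{r(1-r)}{1-2r}\,\frac{1-a^{2}}{(1+ar)^{2}}+(1-t)a\le 1 .
\]

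Since $H(1,r)=1$, clearing the denominator $(1+ar)^{2}(1-2r)$ and factoring out the zero at $a=1$ lets me write $H(a,r)-1=\frac{(1-a)}{(1+ar)^{2}(1-2r)}\,\Theta(a,r)$, where
\[
\Theta(a,r)=-t(1-2r)(1-r)(1+ar)+(1-t)r(1-r)(1+a)-(1-t)(1-2r)(1+ar)^{2},
\]
so the inequality becomes $\Theta(a,r)\le0$ for $a\in[0,1)$. Here I would use the crude but clean estimates $1+a\le2$ in the single positive term and $1+ar\ge1$ in the two negative terms to majorize $\Theta$ by the $a$-free quantity $-t(1-2r)(1-r)+2(1-t)r(1-r)-(1-t)(1-2r)=-\bigl(2r^{2}+(t-4)r+1\bigr)$. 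This is $\le0$ precisely when $2r^{2}+(t-4)r+1\ge0$, i.e. $r\le r_{2,t}$. Both constraints must hold, so the bound is established for $r\le\min(r_{1,t},r_{2,t})$, and Lemma~\ref{lemma2.6} identifies $t_{*}$ as the crossover and rewrites this minimum as the stated piecewise $r_{t}$.

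I expect the main obstacle to be the passage from $\Theta(a,r)$ to its $a$-free majorant: it is the only genuinely lossy step, and a finer (concavity-based) analysis of $\Theta$, whose leading coefficient in $a$ is negative, indicates that it is precisely this estimate, rather than the monotonicity constraint, that forces the smaller radius $r_{2,t}$ in the regime $t>t_{*}$. The secondary difficulty is purely organizational: checking that $\frac{(1-t)r}{(1+r)(1-2r)}\le t/2$ and $2r^{2}+(t-4)r+1\ge0$ really cut out the two radii in the asserted closed form, verifying $r_{t}<\tfrac12$, and assembling the two regimes through Lemma~\ref{lemma2.6}. Because no extremal function is produced to realize $r_{t}$, I would not attempt a sharpness argument; the theorem only asserts sufficiency.
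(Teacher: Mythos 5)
Your proposal is correct and follows the paper's overall strategy exactly — Lemma \ref{lemma2.4} to sum the derivative tail, Lemma \ref{lemma2.3} (with $p=1$) giving the constraint $2tr^{2}+(2-t)r-t\le 0$, i.e.\ $r\le r_{1,t}$, then the Schwarz--Pick bound $(\ref{3.2})$ and Lemma \ref{lemma2.6} to combine the two radii. The only place you diverge is the final polynomial verification. The paper writes the target inequality as $P(a)\le 0$ for a cubic $P$ with $P(1)=0$ and proves $P$ is increasing on $[0,1]$ by a three-step derivative analysis: $P'''(a)>0$, then $P''(a)\ge P''(0)=2r\bigl(2r^{2}+(t-4)r+1\bigr)=2r\nu(r)\ge 0$ for $r\le r_{2,t}$, then $P'(a)\ge P'(0)=(1-2r)(tr^{2}-2r+1)>0$, whence $P(a)\le P(1)=0$. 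You instead factor out the known zero, $P(a)=(1-a)\Theta(a,r)$, and majorize $\Theta$ termwise (using $1+a\le 2$ and $1+ar\ge 1$) by $-\nu(r)$ with the \emph{same} quadratic $\nu(r)=2r^{2}+(t-4)r+1$, so the constraint $r\le r_{2,t}$ you obtain is identical to the paper's and nothing is lost by the crude estimate. Your version is shorter and makes it transparent why $\nu$ appears (it is the $a$-free part of the cofactor), while the paper's monotonicity argument avoids the explicit factorization; both are complete, and your decision to skip sharpness is right since Theorem \ref{theorem3.4}, unlike the preceding results, does not claim the radius is best possible.
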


\begin{proof}
According to the assumption and Lemma\ref{lemma2.4}, we have
\begin{align*}
&t|f(z)|+(1-t)\sum_{k=1}^{\infty}|\frac{{f^{(k)}(z)}}{k!}||z|^{k}+(1-t)a\\
\leq&t|f(z)|+(1-t)(1-|f(z)|^2)\sum_{k=1}^{\infty}\frac{r^{k}}{(1-r)^k(1+r)}+(1-t)a\\
=&t|f(z)|+\frac{(1-t)r}{(1+r)(1-2r)}(1-|f(z)|^{2})+(1-t)a.
\end{align*}
Next, in order to use  Lemma \ref{lemma2.3}, we need to verify the condition  $\frac{(1-t)r}{t(1+r)(1-2r)}\leq\frac{1}{2}$. It is equivalent to show that $l(r)\leq0$, where $$l(r)=2tr^2+(2-t)r-t.$$
That is $r\leq r_{1,t}$, where $r_{1,t}=\frac{t-2+\sqrt{9t^2-4t+4}}{4t}$.
Now Lemma \ref{lemma2.3} implies that $t|f(z)|+\frac{(1-t)r}{(1+r)(1-2r)}(1-|f(z)|^{2}$ is a monotonically increasing function of $|f(z)|$.  Considering inequality (\ref{3.2}), we have
\begin{align*}
&t|f(z)|+\frac{(1-t)r}{(1+r)(1-2r)}(1-|f(z)|^{2})+(1-t)a\\
\leq& t\frac{r+a}{1+ar}+\frac{(1-t)r}{(1+r)(1-2r)}[1-(\frac{r+a}{1+ar})^{2}]+(1-t)a\\
=&\frac{t(a+r)(1+ar)(1-2r)+(1-t)(1-a^2)r(1-r)+(1-t)a(1+ar)^2(1-2r)}{(1+ar)^2(1-2r)}\\
:&=K(a,r,t).
\end{align*}
It is enough to show that $K(a,r,t)\leq1$ holds for $r\leq r_t$, that is $P(a)\leq0$ holds for $r\leq r_t$, where\begin{align*}
P(a)&=(1-t)(1-2r)r^2a^3+(2r^2+tr-4r+1)ra^2+(-2tr^3+tr^2+4r^2-4r+1)a-tr^2-r^2\\
&+3r-1.
\end{align*}
Next, we divide it into two cases to discuss.

{\bf Case 1} If $t\in(0,t_{*})$, where $t_{*}$ is the unique positive real root of the equation $$t^4-8t^3+7t^2+4t-4=0$$ for $t\in (0,1)$.
Simple computations lead to that
\begin{align*}
P'(a)=&3(1-t)(1-2r)r^2a^2+2(2r^2+tr-4r+1)ra-2tr^3+tr^2+4r^2-4r+1,\\
P''(a)=&6(1-t)(1-2r)r^2a+2(2r^2+tr-4r+1)r,\\
P'''(a)=&6(1-t)(1-2r)r^2>0.
\end{align*}
The above last inequality implies that $P''(a)\geq P''(0)=2r[2r^2+(t-4)r+1]:=2r\nu(r)$.
Observe that $\nu(0)=1>0$ and $\nu(\frac{1}{2})=\frac{t-1}{2}<0$. There is a unique number $r_{2,t}=\frac{4-t-\sqrt{t^2-8t+8}}{4}\in (0,\frac{1}{2})$ such that $\nu(r_{2,t})=0$. Hence, $\nu(r)\geq0$ holds for $r\leq r_{2,t}$.
Then we have $P''(a)\geq P''(0)\geq0$ holds for $r\leq\min\{r_{1,t},r_{2,t}\}=r_{1,t}$. The last equality holds because of Lemma \ref{lemma2.6}.
It follows that $P'(a)\geq P'(0)=(1-2r)(tr^2-2r+1)>0$. Thus $P(a)\leq P(1)=0$ holds for $r\leq r_{1,t}$.

{\bf Case 2}  $t\in(t_{*},1)$. The prove is similar to that of  Case 1, we omit it.

\end{proof}

\medskip

{\bf Declarations}\\

{\bf Availability of data}\quad   The data that support the findings of this study are included in this
paper.

{\bf Conflict of interest} \quad The authors have not disclosed any competing interests.

\end{document}